\theoremstyle{plain}
\newtheorem{theorem}{Theorem}
\newtheorem{lemma}[theorem]{Lemma}
\newtheorem{proposition}[theorem]{Proposition}
\theoremstyle{remark}
\newtheorem{remark}[theorem]{Remark}
\numberwithin{theorem}{section}
\numberwithin{equation}{section}
\newcommand{\N}{\mathbb{N}}
\newcommand{\R}{\mathbb{R}}
\newcommand{\Cleq}{\ensuremath{\lesssim}}  
\newcommand{\Mesh}{\mathcal{M}}
\newcommand{\Submesh}{\mathcal{T}}
\newcommand{\Faces}[1]{\mathcal{F}_{#1}}
\newcommand{\FacesM}{\Faces{}}
\newcommand{\FacesMint}{\FacesM^i}
\newcommand{\FacesMbnd}{\FacesM^b}
\newcommand{\Shape}{\gamma}
\newcommand{\Normal}{n}
\newcommand{\Skeleton}{\Sigma}
\newcommand{\Lagr}[2]{\mathcal{L}_{#1}(#2)}
\newcommand{\LagrInt}[2]{\mathcal{L}^i_{#1}(#2)}
\newcommand{\Jump}[1]{\left \llbracket #1 \right \rrbracket}
\newcommand{\Domain}{\Omega}
\newcommand{\Dim}{d}
\newcommand{\Degree}{p}
\newcommand{\Poly}[2]{\mathbb{P}_{\ifthenelse{\equal{#1}{}}{\Degree} {#1}}(#2)}
\newcommand{\Leb}[1]{L^2(#1)}                                
\newcommand{\Sob}[1]{H^1(#1)}                              
\newcommand{\SobH}[1]{H^1_0(#1)}
\newcommand{\SobHD}[1]{H^{-1}(#1)}
\newcommand{\SpaceDisc}{S}
\newcommand{\SpaceCont}{V}
\newcommand{\DualSp}[1]{#1^*} 
\newcommand{\HHO}{\textup{{\tiny H}}}
\newcommand{\HHOspace}[1]{\hat S^{#1}_\HHO}  %{S_{\HHO}}
\newcommand{\SpaceContHHO}{\hat \SpaceCont}   %{\SpaceCont_{\HHO}}
\newcommand{\hs}{\hat s}
\newcommand{\hsigma}{\hat \sigma}
\newcommand{\hU}{\hat U}
\newcommand{\hu}{\hat u}
\newcommand{\hv}{\hat v}
\newcommand{\hw}{\hat w}
\newcommand{\hz}{\hat z}
\newcommand{\hZ}{\hat Z}
\newcommand{\hpsi}{\hat \psi}
\newcommand{\Grad}{\nabla}
\DeclareMathOperator{\GradM}{\nabla_{\Mesh}}             
\DeclareMathOperator{\Lapl}{\Delta}
\newcommand{\PotRec}{\mathcal{R}}
\newcommand{\PotRecSt}{\mathcal{S}}
\newcommand{\Interp}{\hat{\mathcal{I}}}
\newcommand{\EllPro}{\mathcal{E}}
\newcommand{\LebPro}{\Pi}
\newcommand{\BubbOper}[1]{\mathcal{B}_{#1}} 
\newcommand{\AvgOper}[1]{\mathcal{A}_{#1}}
\newcommand{\hAvgOper}[1]{\hat{\mathcal{A}_{#1}}}
\newcommand{\Smt}{E}
\newcommand{\HHOsmt}{{\Smt_{\HHO}}}
\newcommand{\hHHOsmt}{{\hat \Smt_{\HHO}}}
\newcommand{\id}{\mathrm{Id}}
\newcommand{\Ritz}{\hat{\mathcal{P}}}
\newcommand{\aext}{\widetilde{a}}
\newcommand{\aextHHO}{\aext_{\HHO}}
\newcommand{\HHObform}{b_{\HHO}}
\newcommand{\Stab}{\theta}   %{\mathcal{S}}
\newcommand{\normSemi}[1]{\lvert #1 \rvert} %{\left \lvert #1 \right \rvert}          
\newcommand{\norm}[1]{\| #1 \|} %{\left \| #1 \right \|}                     
\newcommand{\normHHO}[1]{\normSemi{#1}_{\aextHHO}}
\newcommand{\normLeb}[2]{\norm{#1}_{\Leb{#2}}}
\newcommand{\normOp}[3]{\norm{#1}_{\ifx#2#3\mathcal{L}(#2)\else\mathcal{L}(#2,#3)\fi}}       
\newcommand{\normDual}[2]{\left\| #1 \right\|_{\DualSp{#2}}}            
\begin{document}

\title[A quasi-optimal HHO method with $H^{-1}$ loads]{A quasi-optimal variant of the hybrid high-order method for elliptic PDEs with $H^{-1}$ loads}
 
%\author{\today -- AE \& PZ, ENPC, INRIA \& TU Dortmund}

\author[A.~Ern]{Alexandre Ern}
\address{Universit\'{e} Paris-Est, CERAMICS (ENPC), 77455 Marne-la-Vall\'{e}e cedex 2, France and INRIA Paris, 75589 Paris, France}
\email{alexandre.ern@enpc.fr}

\author[P.~Zanotti]{Pietro Zanotti}
\address{TU Dortmund \\ Fakult{\"a}t f{\"u}r Mathematik \\ D-44221 Dortmund \\ Germany}
\email{zanottipie@gmail.com}

\keywords{hybrid methods, quasi-optimality, rough loads, arbitrary order, general meshes}

\subjclass[2010]{65N30, 65N12, 65N15}

\begin{abstract}
Hybrid High-Order methods for elliptic diffusion problems have been originally formulated for loads in the Lebesgue space $\Leb{\Domain}$. In this paper we devise and analyze a variant thereof, which is defined for any load in the dual Sobolev space $\SobHD{\Domain}$. The main feature of the present variant is that its $H^1$-norm error can be bounded only in terms of the $H^1$-norm best error in a space of broken polynomials. We establish this estimate with the help of recent results on the quasi-optimality of nonconforming methods. We prove also an improved error bound in the $L^2$-norm by duality. Compared to previous works on quasi-optimal nonconforming methods, the main novelties are that Hybrid High-Order methods handle pairs of unknowns, and not a single function, and, more crucially, that these methods employ a reconstruction that is one polynomial degree higher than the discrete unknowns. The proposed modification affects only the formulation of the discrete right-hand side. This is obtained by properly mapping discrete test functions into $\SobH{\Domain}$.   
\end{abstract}

\maketitle

\section{Introduction}

Hybrid High-Order (HHO) methods have been introduced in \cite{DiPietro.Ern.Lemaire:14} for diffusion problems and in \cite{DiPietro.Ern:15} for locking-free linear elasticity. These methods employ face unknowns and cell unknowns and are devised from two local operators, a reconstruction operator and a stabilization operator. HHO methods support general meshes (with polyhedral cells and nonmatching interfaces), they are locally conservative and are robust in various regimes of practical interest, and they offer computational benefits resulting from the local elimination of cell unknowns by static condensation. The realm of applications of HHO methods has been vigorously expanded over the last few years; for brevity, we only mention \cite{BoDPS:2017,AbErPi2018,AbErPi2018a} for nonlinear solid mechanics and refer the reader to the bibliography therein. An Open-Source library for HHO methods based on generic programming is also available \cite{CicDE:2018}. Finally, we mention that HHO methods are closely related to hybridizable discontinuous Galerkin methods \cite{CoGoL:09} and to nonconforming virtual element methods \cite{Ayuso2016}, as shown in \cite{CoDPE:2016}. 

In the present work, we focus on the Poisson model problem which reads as follows:
\begin{equation}
	\label{Poisson}
	\begin{split}
		\text{Given } \; f \in \SobHD{\Domain}, 
		\; \text{ find } \; u \in \SobH{\Domain} \;\text{ such that }\\
		\forall w \in \SobH{\Domain} \quad \int_\Domain \Grad u \cdot \Grad w = \left\langle f, w\right\rangle_{\SobHD{\Domain} \times \SobH{\Domain}}.
	\end{split}
\end{equation}
Although the model problem~\eqref{Poisson} is posed for general loads in the dual Sobolev space $\SobHD{\Domain}$, the devising and analysis of HHO methods in \cite{DiPietro.Ern.Lemaire:14} requires that the load is in the Lebesgue space $\Leb{\Domain}$. In particular, $H^1$-norm error estimates with optimal decay rates have been derived in \cite{DiPietro.Ern.Lemaire:14} for smooth solutions in $H^{2+p}(\Domain)$ (where $p\ge0$ is the polynomial order of the face unknowns), and more generally hold true under the regularity requirement $u\in H^{1+s}(\Domain)$, $s>\frac12$, which is reasonable for the model problem~\eqref{Poisson} if $f\in \Leb{\Domain}$. Moreover, improved $L^2$-norm error estimates can also be derived by means of the Aubin--Nitsche duality argument. These results were recently extended in \cite{ErnGu:18} to the regularity requirement $u\in H^{1+s}(\Domain)$, $s>0$, and for loads in the Lebesgue space $L^q(\Domain)$ with $q>\frac{2d}{2+d}$, where $d$ is the space dimension, that is $q>1$ if $d=2$ and $q>\frac65$ if $d=3$. Therein, quasi-optimal error estimates were established in an augmented norm that is stronger than the $H^1$-norm. 

The above discussion shows that a theoretical gap still remains in the analysis of the HHO methods. One option to fill this gap would be to bound the $H^1$-norm error only in terms of the $H^1$-norm best error in the underlying discrete space. In fact, such quasi-optimal estimate would not require regularity assumptions beyond $\SobH{\Domain}$ for the solution and $\SobHD{\Domain}$ for the load. Notably, the abstract theory of \cite{Veeser.Zanotti:17p1} on the quasi-optimality of nonconforming methods indicates that an estimate in this form can be expected for a variant of the original HHO method of \cite{DiPietro.Ern.Lemaire:14}. This is the main achievement of the present work. In particular, the modified HHO method is defined and stable for arbitrary loads in $\SobHD{\Domain}$, as well as properly consistent.    

Quasi-optimality in the energy norm has been previously achieved by variants of classical nonconforming methods \cite{Veeser.Zanotti:17p2} and discontinuous Galerkin and other interior penalty methods \cite{Veeser.Zanotti:17p3} for second- and fourth-order elliptic problems.
Similarly to \cite{Veeser.Zanotti:17p2,Veeser.Zanotti:17p3}, our modification of the original HHO method affects only the discretization of the load. In the novel HHO method, the discrete test functions are transformed through an averaging operator to achieve stability and bubble smoothers to enforce consistency. 
The main novelties concerning the analysis of nonconforming quasi-optimal methods is that we extend the abstract framework of \cite{Veeser.Zanotti:17p1} so as to handle pairs of functions (one defined in the computational domain and one on the mesh skeleton) and that we deal with the presence of a reconstruction operator that is one polynomial degree higher than the discrete unknowns. In addition to quasi-optimality in the $H^1$-norm, we also show that the Aubin--Nitsche duality argument still allows one to derive improved $L^2$-norm error estimates for the modified HHO method. Finally, owing to the results from \cite{CoDPE:2016}, we notice that the present findings thus provide a way to achieve the same quasi-optimal properties by appropriately modifying the discrete load in hybridizable discontinuous Galerkin and nonconforming virtual element methods.

This paper is organized as follows. In section~\ref{S:abstract-framework}, we briefly 
summarize the abstract framework from \cite{Veeser.Zanotti:17p1} for 
quasi-optimal nonconforming methods. In section~\ref{S:HHO-method} we outline the 
main ideas and results concerning HHO methods on simplicial meshes with loads 
in $\Leb{\Domain}$. In section~\ref{S:quasi-optimal-HHO},
we present and analyze a quasi-optimal variant of the HHO method. This section
contains the main results of this work. Finally, in section~\ref{S:polytopic-meshes},
we show how the results of section~\ref{S:quasi-optimal-HHO} can be extended 
to the setting of polytopic meshes.

%---------------------------------------------------------------
%---------------------------------------------------------------

\section{Abstract framework for quasi-optimality}
\label{S:abstract-framework}

% Short intro
In this section we summarize the framework of \cite{Veeser.Zanotti:17p1} in a form that is convenient to guide the design of the method proposed in section~\ref{S:quasi-optimal-HHO}. Moreover, we recall the notion of quasi-optimality and a couple of related results. 

% Continuous problem
Let $\SpaceCont$ be a Hilbert space with scalar product $a$. Denote by $\DualSp{\SpaceCont}$ the topological dual space of $\SpaceCont$ and consider the elliptic variational problem
\begin{equation}
\label{cont-prob-abstract}
\text{Given } \; \ell \in \DualSp{\SpaceCont}, 
\; \text{ find } \; u \in \SpaceCont \;\text{ such that }\;
\forall w \in \SpaceCont \quad a(u,w) = \left\langle \ell, w\right\rangle_{\DualSp{\SpaceCont} \times \SpaceCont}
\end{equation}
which is uniquely solvable, according to the Riesz representation theorem.

% Discrete problem
Let $\SpaceDisc$ be a finite-dimensional linear space and assume that $a$ can be extended to a scalar product $\aext$ on $\SpaceCont + \SpaceDisc$, inducing the extended \textit{energy norm} $\norm{\cdot} := \sqrt{\aext(\cdot,\cdot)}$. Let $\Smt: \SpaceDisc \to \SpaceCont$ be a linear operator and consider the following approximation method for \eqref{cont-prob-abstract}:
\begin{equation}
\label{disc-prob-abstract}
\text{Given } \; \ell \in \DualSp{\SpaceCont}, 
\; \text{ find } \; U \in \SpaceDisc \;\text{ such that }\;
\forall \sigma \in \SpaceDisc \quad \aext(U,\sigma) = \left\langle \ell, \Smt \sigma\right\rangle_{\DualSp{\SpaceCont} \times \SpaceCont}
\end{equation} 
which is uniquely solvable, due to the positive-definiteness of $\aext$ on $\SpaceDisc$. We say that $\SpaceDisc$ is a \textit{nonconforming space} and \eqref{disc-prob-abstract} is \textit{nonconforming method} whenever $\SpaceDisc \nsubseteq \SpaceCont$.

% Quasi-optimality
Since $U \in \SpaceDisc$, the approximation error $u-U$ satisfies $\inf_{s \in \SpaceDisc} \norm{u-s} \leq \norm{u-U}$, showing that the best error $\inf_{s \in \SpaceDisc} \norm{u-s}$ is an intrinsic benchmark for \eqref{disc-prob-abstract}. Hence, we say that the method \eqref{disc-prob-abstract} is \textit{quasi-optimal} for \eqref{cont-prob-abstract} in the norm 
$\norm{\cdot}$ if there is a constant $C\geq 1$ such that
\begin{equation}
\label{quasi-optimality}
\norm{u-U} 
\leq
C \inf \limits_{s \in \SpaceDisc} \norm{u-s}
\end{equation}  
and $C$ is independent of $u$ and $U$. In this case, we refer to the best value of $C$ as the \textit{quasi-optimality constant} of \eqref{disc-prob-abstract} in the norm $\norm{\cdot}$.

% Smoothing
\begin{remark}[Smoothing and stability by $\Smt$]
	\label{R:smoothing}
	We call $\Smt$ \textit{smoother}, because its action often increases the regularity of the elements of $\SpaceDisc$. An immediate observation is that the use of a smoother makes the duality $\left\langle \ell, \Smt \sigma \right\rangle _{\DualSp{\SpaceCont} \times \SpaceCont}$ in \eqref{disc-prob-abstract} well-defined for all $\ell \in \DualSp{\SpaceCont}$, irrespective of the possible nonconformity of $\SpaceDisc$. Notice also that $\Smt$ is bounded, because $\SpaceDisc$ is finite-dimensional. Thus, we infer that \eqref{disc-prob-abstract} is a stable method, in that
	\begin{equation}
	\label{full-stability}
	\norm{U} 
	\leq
	\normOp{\Smt}{\SpaceDisc}{\SpaceCont} \normDual{\ell}{\SpaceCont}
	=
	\normOp{\Smt}{\SpaceDisc}{\SpaceCont} \norm{u}.
	\end{equation} 
	Moreover, the operator norm of $\Smt$ is the best possible constant in this inequality for an arbitrary load $\ell \in \DualSp{\SpaceCont}$. The above stability \eqref{full-stability} is necessary for quasi-optimality, owing to the triangle inequality, and the quasi-optimality constant can be bounded from below in terms of the operator norm of $\Smt$.
\end{remark}

% Computational feasibility
\begin{remark}[Feasible smoothers]
	\label{R:feasible-smoothing}
	The computation of $U$ from \eqref{disc-prob-abstract} requires the evaluation of $\Smt$ on each element of the basis $\{ \phi_1, \dots, \phi_n \}$ of $\SpaceDisc$ at hand. Hence, it is highly desirable that the duality $\left\langle \ell, \Smt \phi_i\right\rangle_{\DualSp{V} \times V}$, $i=1,\dots,n$,
	can be evaluated with $\mathrm{O(1)}$ operations. To this end, a sufficient condition is that each $\Smt \phi_i$ is locally supported and can be obtained from $\phi_i$ with $\mathrm{O}(1)$ operations.  
\end{remark}

% Conforming methods
Conforming Galerkin methods for \eqref{cont-prob-abstract} fit into this abstract framework with
\begin{equation*}
\label{conforming-Galerkin}
\SpaceDisc \subseteq \SpaceCont
\qquad \qquad
\aext = a
\qquad \qquad
\Smt = \id 
\end{equation*}
and are quasi-optimal in the energy norm, according to the so-called C\'{e}a's lemma \cite{Cea:64}. 
%
% Full consistency
Still, quasi-optimality can be achieved also if $\SpaceDisc$ is nonconforming, depending on the interplay of $\aext$ and $\Smt$. In fact, \cite[Theorem~4.14]{Veeser.Zanotti:17p1} states that the following \textit{algebraic consistency}
\begin{equation}
\label{full-consistency}
\forall s \in \SpaceCont \cap \SpaceDisc,
\;\sigma \in \SpaceDisc
\qquad
\aext(s, \Smt \sigma)
=
\aext(s, \sigma).
\end{equation}
is necessary and sufficient for the existence of a constant $C$ so that \eqref{quasi-optimality} holds. This is actually equivalent to prescribe that the solution $u$ of \eqref{cont-prob-abstract} solves also \eqref{disc-prob-abstract}, whenever $u \in \SpaceCont \cap \SpaceDisc$. 

% Size of quasi-optimality constant
It is worth noticing that \eqref{full-consistency} is, possibly, a mild or trivial condition, as it involves only conforming trial functions $s \in \SpaceCont \cap \SpaceDisc$. Thus, it is not a surprise that additional informations are actually needed in order to access the size of the quasi-optimality constant. For instance, Remark~\ref{R:smoothing} reveals a lower bound in terms of the operator norm of the employed smoother. Moreover, one may expect that the quasi-optimality constant depends also on the discrepancy of the left- and right-hand sides of \eqref{full-consistency} for nonconforming trial functions $s \in \SpaceDisc \setminus \SpaceCont$. This claim can be confirmed with the help of \cite[Theorem~4.19]{Veeser.Zanotti:17p1}.    

% Strategy in next sections
In section~\ref{S:quasi-optimal-HHO}, we actually build on a generalized version of \eqref{full-consistency}, because the setting considered there does not exactly fit into the framework described here. 

%---------------------------------------------------------------
%---------------------------------------------------------------

\section{The HHO method on simplicial meshes}
\label{S:HHO-method}

In this section we recall the HHO method for \eqref{Poisson} proposed in \cite{DiPietro.Ern.Lemaire:14} and some of its properties. In order to avoid unnecessary technicalities, we restrict our attention to matching simplicial meshes, here and in the next section. The extension of our results to more general meshes is addressed in section~\ref{S:polytopic-meshes}.

\subsection{Discrete problem}
\label{SS:discrete-problem}

%
% Simplicial mesh
Let $\Domain \subseteq \R^\Dim$, $\Dim \in \{2,3\}$, be an open and bounded polygonal/polyhedral set with Lipschitz-continuous boundary. Let $\Mesh = (K)_{K \in \Mesh}$ be a matching simplicial mesh of $\Domain$, i.e., all cells of $\Mesh$ are $\Dim$-simplices and, for any $K \in \Mesh$ with vertices $\{ a_0, \dots, a_\Dim \}$ and for all $K' \in \Mesh$, the intersection $K \cap K'$ is either empty or the convex hull of a subset of $\{a_0, \dots, a_\Dim\}$.

% Faces and skeleton
We denote by $\FacesMint$ the set of all interfaces of $\Mesh$. Since the mesh is matching, any interface $F \in \FacesMint$ is such that $F = K_1 \cap K_2$ for some $K_1, K_2 \in \Mesh$, and $F$ is a full face of both $K_1$ and $K_2$. Similarly, we collect the boundary faces into $\FacesMbnd$ and observe that each $F \in \FacesMbnd$ satisfies $F = K \cap \partial \Domain$ for some $K \in \Mesh$. Then, the set $\FacesM := \FacesMint \cup \FacesMbnd$ consists of all faces, and the skeleton of $\Mesh$ is given by 
\begin{equation*}
	\label{skeleton}
	\Skeleton := \bigcup_{F \in \FacesM} F.
\end{equation*}

% Diameter and normal vector
For each $K \in \Mesh$, we denote by $\Faces{K}$ the set of all faces of $K$, i.e. the faces $F \in \FacesM$ such that $F \subseteq K$. We indicate by $h_K$ and $h_F$ the diameters of $K$ and $F$, respectively. Moreover, we write $\Normal_K$ for the outer normal unit vector of $K$.

% Polynomials
For $\Degree \in \N_0 := \N \cup \{0\}$, $K \in \Mesh$ and $F \in \FacesM$, let $\Poly{\Degree}{K}$ and $\Poly{\Degree}{F}$ be the spaces of all polynomials of total degree $\leq \Degree$ in $K$ and $F$, respectively. The corresponding broken spaces on $\Mesh$ and $\Sigma$ are given by
\begin{align*}
	\Poly{\Degree}{\Mesh} &:= \{ s_\Mesh: \Domain \to \R \mid \forall K \in \Mesh \;\; (s_\Mesh)_{|K} \in \Poly{\Degree}{K} \},	\\[1mm]
	\Poly{\Degree}{\Skeleton} &:= \{s_\Skeleton: \Skeleton \to \R \mid \forall F \in \FacesM \;\; (s_\Skeleton)_{|F} \in \Poly{\Degree}{F}, \quad \forall F \in \FacesMbnd \;\; (s_\Skeleton)_{|F} = 0 \}.
\end{align*}

% Projectors
We shall make use of the $L^2$-orthogonal projections $\LebPro_\Mesh : \Leb{\Domain} \to \Poly{\Degree}{\Mesh}$ and $\LebPro_\Skeleton: \Leb{\Skeleton} \to \Poly{\Degree}{\Skeleton}$, which are defined such that
\begin{equation*}
\label{L2-projections}
\int_K q \LebPro_\Mesh v_\Mesh = \int_K q v_\Mesh
\qquad \text{and} \qquad
\int_F r \LebPro_\Skeleton v_\Skeleton = \int_F r v_\Skeleton
\end{equation*}
for all $K \in \Mesh$, $q \in \Poly{\Degree}{K}$, $v_\Mesh \in \Leb{\Domain}$ and for all $F \in \FacesMint$, $r \in \Poly{\Degree}{F}$, $v_\Skeleton \in \Leb{\Skeleton}$, respectively.  

% The HHO space
The HHO space of degree $\Degree$ is the Cartesian product
\begin{equation*}
	\label{HHO-space}
	\HHOspace{\Degree}
	:=
	\Poly{\Degree}{\Mesh} \times \Poly{\Degree}{\Skeleton},
\end{equation*}
so that the elements of $\HHOspace{\Degree}$ are pairs $\hs = (s_\Mesh, s_\Skeleton)$. The first component of $\hs$ is intended to approximate the solution $u$ of \eqref{Poisson} in each simplex of $\Mesh$, whereas the second component is intended to approximate the trace of $u$ on each face composing the skeleton $\Sigma$. Notice that the face component of a member of $\HHOspace{\Degree}$ incorporates the boundary condition of \eqref{Poisson}. In what follows, we denote both pairs and spaces of pairs using a hat symbol. Moreover, we drop the superscript $p$ and simply write $\HHOspace{}$ to alleviate the notation. The subscript 'H' serves to distinguish the HHO space from its abstract counterpart in section~\ref{S:abstract-framework}.

% Potential reconstruction
The first constitutive ingredient of the HHO method is a suitable \textit{higher-order} reconstruction. This is realized through the linear operator $\PotRec: \HHOspace{} \to \Poly{\Degree+1}{\Mesh}$, which is uniquely determined by the conditions
\begin{subequations}
	\label{gradient-reconstruction}
	\begin{equation}
		\label{gradient-reconstruction-grad}
		\forall q \in \Poly{\Degree+1}{K}
		\;\;
		\int_K \Grad \PotRec \hs \cdot \Grad q
		=
		-\int_K s_{\Mesh} \Lapl q
		+
		\int_{\partial K} s_\Skeleton  \Grad q \cdot \Normal_K 
	\end{equation} 
	and
	\begin{equation}
		\label{gradient-reconstruction-avg}
		\int_K \PotRec \hs 
		=
		\int_K s_\Mesh
	\end{equation}  
\end{subequations} 
for all $K \in \Mesh$ and $\hs=(s_\Mesh, s_\Skeleton) \in \HHOspace{}$. The local problem in \eqref{gradient-reconstruction-grad} is uniquely solvable up to an additive constant, which is then fixed by \eqref{gradient-reconstruction-avg}. The computation of $\PotRec \hs$ can be performed element-wise because, for each $K \in \Mesh$, the restriction $(\PotRec \hs)_{|K}$ depends only on $(s_\Mesh)_{|K}$ and $(s_\Skeleton)_{|\partial K}$.

% Stabilization
The second constitutive ingredient of the HHO method is the following stabilization bilinear form defined on $\HHOspace{} \times \HHOspace{}$:
\begin{equation}
	\label{HHO-stabilization}
	\Stab(\hs,\hsigma)
	:=
	\sum \limits_{K \in \Mesh}
	\sum \limits_{F \in \Faces{K}}
	\int_F  
	\LebPro_\Sigma (s_\Skeleton - (\PotRecSt \hs)_{|K})
	\LebPro_\Sigma (\sigma_\Skeleton - (\PotRecSt \hsigma)_{|K}),
\end{equation}
with arbitrary $\hs = (\hs_\Mesh, \hs_\Skeleton)$ and $\hsigma = (\hsigma_\Mesh, \hsigma_\Skeleton)$ in $\HHOspace{}$ and with the stabilization operator $\PotRecSt: \HHOspace{} \to \Poly{\Degree+1}{\Mesh}$ such that
\begin{equation}
	\label{potential-reconstruction}
	\PotRecSt \hs 
	:=
	s_\Mesh + (\id - \LebPro_\Mesh) \PotRec \hs.
\end{equation}
Since both $\PotRec$ and $\LebPro_\Mesh$ can be computed element-wise, the operator $\PotRecSt$ inherits this property.

% The HHO bilinear form
Denote by $\GradM$ the broken gradient on $\Mesh$, whose action on an element-wise $H^1$-function $v$ is given by $(\GradM v)_{|K} := \Grad (v_{|K})$ for all $K \in \Mesh$. The HHO bilinear form on $\HHOspace{}\times \HHOspace{}$ can be written as follows:
\begin{equation*}
	\label{HHO-bilinear-form}
	\HHObform (\hs,\hsigma)
	:=
	\int_\Domain \GradM \PotRec \hs \cdot \GradM \PotRec \hsigma
	+
	\Stab(\hs,\hsigma).
\end{equation*}
It can be verified that, for all $\hs \in \HHOspace{}$, the semi-norm $\Stab(\hs,\hs)^{\frac{1}{2}}$ penalizes the discrepancy between the face component of $\hs$ and the trace of the cell component on the skeleton $\Skeleton$. This, in turn, enforces positive-definiteness of $\HHObform$, as stated in Lemma~\ref{L:discrete-stability} below.

% Standard HHO method
Assume for the moment that the load $f$ of \eqref{Poisson} is in $\Leb{\Domain}$. The HHO method of \cite{DiPietro.Ern.Lemaire:14} for the Poisson problem reads
\begin{equation}
	\label{HHO-method-standard}
	\text{Find } \; \hU \in \HHOspace{} 
	\;\text{ such that }\;
	\forall \hsigma = (\sigma_\Mesh, \sigma_\Skeleton) \in \HHOspace{}
	\quad
	\HHObform(\hU, \hsigma)
	=
	\int_\Domain f \sigma_\Mesh.   
\end{equation}

%--------------------------------------------------------------

\subsection{Discrete stability and approximation properties}
\label{SS:stability-interpolation}

%Shape parameter
We now aim at assessing the stability of the form $\HHObform$ and the approximation properties of the space $\HHOspace{}$. 
For all $K \in \Mesh$, denote by $r_K$ the radius of the largest ball inscribed in $K$. The shape parameter $\Shape = \Shape(\Mesh)$ of the mesh $\Mesh$ is defined as the largest positive real number such that 
\begin{equation}
	\label{shape-parameter}
	\forall K \in \Mesh \qquad
	\Shape \,r_K \leq h_K.
\end{equation}
%
% Hidden contants
We indicate by $C_\Shape$ and $C_{\Shape, \Degree}$ two generic functions of the quantities indicated by the subscripts, nondecreasing in each argument, which do not need to be the same at each occurrence. Sometimes, we use the abbreviation $A \Cleq B$ in place of $A \leq C_{\Shape, \Degree} B$. 

% Trace and Poincare inequalities
For instance, if $K \in \Mesh$ and $F \in \Faces{K}$, the so-called discrete and continuous trace inequalities read
\begin{align}
	\label{discrete-trace-inequality}
	&\forall q \in \Poly{\Degree+1}{K}
	\quad \;
	h_F^{-\frac{1}{2}} \normLeb{q}{F}
	\leq
	C_{\Shape, \Degree}
	h_K^{-1}\normLeb{q}{K}\\
	\label{trace-inequality}
	&\forall v \in \Sob{K} \qquad
	h_F^{-\frac{1}{2}} \normLeb{v}{F}
	\leq
	C_\Shape
	\left( h_K^{-1} \normLeb{v}{K}
	+
	\normLeb{\Grad v}{K}\right) 
\end{align}
see, e.g., \cite[Lemmata~1.46~and~1.49]{DiPietro.Ern:12}. Recall also that, if $v \in \Sob{K}$ and $\int_K v = 0$, we have the Poincar\'{e}--Steklov inequality \cite{Bebendorf:03}
\begin{equation}
	\label{poincare-inequality}
	\normLeb{v}{K}
	\leq
	\pi^{-1} h_K \normLeb{\Grad v}{K}.
\end{equation}

% Stability of b_H
The following result implies that the HHO bilinear form $\HHObform$ is nondegenerate and ensures that the problem \eqref{HHO-method-standard} is uniquely solvable. A proof can be found in \cite[Lemma~4]{DiPietro.Ern.Lemaire:14}.
\begin{lemma}[Coercivity of $\HHObform$]
	\label{L:discrete-stability}
	For all $\hs = (s_\Mesh, s_\Skeleton) \in \HHOspace{}$, we have
	\begin{equation*}
		\label{discrete-stability}
		\norm{\hs}_{\HHOspace{}}^2
		\leq
		C_{\Shape, \Degree}
		\HHObform(\hs,\hs),
	\end{equation*}
	where the norm $\norm{\cdot}_{\HHOspace{}}$ is defined as
	\begin{equation}
		\label{norm-coercivity}
		\norm{\hs}_{\HHOspace{}}^2
		:=
		\sum \limits_{K \in \Mesh} \left( 
		\normLeb{\Grad s_\Mesh}{K}^2
		+
		\sum \limits_{F \in \Faces{K}}
		h_F^{-1} \normLeb{s_\Sigma - (s_\Mesh)_{|K}}{F}^2 \right). 
	\end{equation}
\end{lemma}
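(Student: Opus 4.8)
The plan is to bound the norm $\norm{\hs}_{\HHOspace{}}^2$ from above by $\HHObform(\hs,\hs) = \normLeb{\GradM \PotRec \hs}{\Domain}^2 + \Stab(\hs,\hs)$, handling the cell-gradient part and the face-jump part of \eqref{norm-coercivity} separately. First I would control the cell-gradient term $\sum_{K}\normLeb{\Grad s_\Mesh}{K}^2$. Fixing $K \in \Mesh$ and testing \eqref{gradient-reconstruction-grad} with $q = \PotRecSt\hs|_K = s_\Mesh + (\id - \LebPro_\Mesh)\PotRec\hs$ (which lies in $\Poly{\Degree+1}{K}$), I would integrate by parts on the right-hand side to rewrite everything in terms of $\Grad s_\Mesh$ and the boundary discrepancy $s_\Skeleton - s_\Mesh|_K$ on $\partial K$; using $\Grad q = \Grad s_\Mesh + \Grad(\id-\LebPro_\Mesh)\PotRec\hs$ and the defining property \eqref{gradient-reconstruction-grad} again, one obtains an identity of the form $\normLeb{\Grad s_\Mesh}{K}^2 = \int_K \Grad\PotRec\hs \cdot \Grad s_\Mesh - \sum_{F\in\Faces{K}}\int_F (s_\Skeleton - s_\Mesh|_K)\,\Grad s_\Mesh\cdot\Normal_K$ or a close variant thereof. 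Estimating the face terms with the discrete trace inequality \eqref{discrete-trace-inequality} ($\Grad s_\Mesh|_F$ against $h_K^{-1}\normLeb{\Grad s_\Mesh}{K}$, after noting $h_F \simeq h_K$ up to $\Shape$) and absorbing, one gets $\normLeb{\Grad s_\Mesh}{K} \Cleq \normLeb{\Grad\PotRec\hs}{K} + \sum_{F\in\Faces{K}} h_F^{-1/2}\normLeb{s_\Skeleton - s_\Mesh|_K}{F}$.

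Next I would control the face-jump term $\sum_{K}\sum_{F\in\Faces{K}} h_F^{-1}\normLeb{s_\Skeleton - s_\Mesh|_K}{F}^2$. The stabilization $\Stab(\hs,\hs)$ measures $\sum_K\sum_{F\in\Faces{K}} \normLeb{\LebPro_\Sigma(s_\Skeleton - \PotRecSt\hs|_K)}{F}^2$, so the task is to pass from $\PotRecSt\hs|_K = s_\Mesh + (\id-\LebPro_\Mesh)\PotRec\hs$ back to $s_\Mesh$ and to remove the projection $\LebPro_\Sigma$. Writing $s_\Skeleton - s_\Mesh|_K = (s_\Skeleton - \PotRecSt\hs|_K) + (\id-\LebPro_\Mesh)\PotRec\hs|_K$, I would use the triangle inequality; for the first summand I still need to strip $\LebPro_\Sigma$: since $s_\Skeleton|_F \in \Poly{\Degree}{F}$ while $\PotRecSt\hs|_K$ need not be a polynomial of degree $\le p$ on $F$, I would split it further, noting $\LebPro_\Sigma(s_\Skeleton - \PotRecSt\hs|_K) = s_\Skeleton|_F - \LebPro_\Sigma\PotRecSt\hs|_K$ and bounding $\normLeb{(\id-\LebPro_\Sigma)\PotRecSt\hs|_K}{F}$ by a discrete inequality. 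For the second summand $(\id-\LebPro_\Mesh)\PotRec\hs|_K$, the key point is that $\PotRec\hs|_K - s_\Mesh|_K$ has vanishing integral over $K$ by \eqref{gradient-reconstruction-avg}, hence so does $(\id-\LebPro_\Mesh)\PotRec\hs|_K$ (as $\LebPro_\Mesh$ preserves degree-$0$), so I can apply the Poincaré–Steklov inequality \eqref{poincare-inequality} together with the continuous trace inequality \eqref{trace-inequality} to get $h_F^{-1/2}\normLeb{(\id-\LebPro_\Mesh)\PotRec\hs|_K}{F} \Cleq \normLeb{\Grad(\id-\LebPro_\Mesh)\PotRec\hs}{K} \le \normLeb{\Grad\PotRec\hs}{K}$, where the last bound uses that $\LebPro_\Mesh$ is an $L^2$-projection and $\Grad$ commutes appropriately (or, more simply, the $H^1$-stability of $\id-\LebPro_\Mesh$ up to $C_{\Shape,\Degree}$ on $\Poly{\Degree+1}{K}$).

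Combining the two steps, summing over $K \in \Mesh$, and recalling that each interface belongs to exactly two cells so that the sum over $(K,F)$ pairs double-counts interface contributions by a factor $\le 2$, yields $\norm{\hs}_{\HHOspace{}}^2 \Cleq \normLeb{\GradM\PotRec\hs}{\Domain}^2 + \Stab(\hs,\hs) = \HHObform(\hs,\hs)$, which is the claim. The main obstacle I anticipate is the first step: extracting the clean bound $\normLeb{\Grad s_\Mesh}{K} \Cleq \normLeb{\Grad\PotRec\hs}{K} + (\text{jump terms})$ from the definition of $\PotRec$ requires choosing the test polynomial $q$ judiciously and carefully tracking the boundary terms produced by integration by parts, since a naive choice reproduces $s_\Mesh$ only through its Laplacian and leaves one needing an inverse inequality to recover the full gradient; the absorption argument must be done so that the constant depends only on $\Shape$ and $\Degree$. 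The remaining estimates are standard applications of \eqref{discrete-trace-inequality}, \eqref{trace-inequality}, \eqref{poincare-inequality}, and finite-dimensional norm equivalence on $\Poly{\Degree+1}{K}$.
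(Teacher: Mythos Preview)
The paper does not prove this lemma in-text; it simply cites \cite[Lemma~4]{DiPietro.Ern.Lemaire:14}. Your plan is correct and follows the lines of the standard argument found there. Two minor simplifications are available. In the first step, there is no need to test with $\PotRecSt\hs$: integrating by parts in \eqref{gradient-reconstruction-grad} and then taking $q = s_\Mesh \in \Poly{\Degree}{K} \subset \Poly{\Degree+1}{K}$ gives directly the identity
\[
\normLeb{\Grad s_\Mesh}{K}^2 = \int_K \Grad\PotRec\hs \cdot \Grad s_\Mesh - \sum_{F\in\Faces{K}}\int_F (s_\Skeleton - (s_\Mesh)_{|K})\,\Grad s_\Mesh\cdot\Normal_K,
\]
from which the bound follows by Cauchy--Schwarz and \eqref{discrete-trace-inequality}. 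In the second step you do not need to ``strip'' $\LebPro_\Sigma$: since both $s_\Skeleton|_F$ and $(s_\Mesh|_K)|_F$ lie in $\Poly{\Degree}{F}$, one has exactly
\[
s_\Skeleton - (s_\Mesh)_{|K} = \LebPro_\Sigma\bigl(s_\Skeleton - (\PotRecSt\hs)_{|K}\bigr) + \LebPro_\Sigma\bigl((\id-\LebPro_\Mesh)\PotRec\hs|_K\bigr),
\]
and the $L^2(F)$-boundedness of $\LebPro_\Sigma$ together with your Poincar\'e--Steklov/trace argument on $(\id-\LebPro_\Mesh)\PotRec\hs$ finishes the job. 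The remaining structure (absorb, sum over $K$, combine the two steps) is exactly as you describe.
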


% Interpolation
Next, we examine the approximation properties of the HHO space underlying \eqref{HHO-method-standard}. To this end, we consider the interpolant $\Interp: \SobH{\Domain} \to \HHOspace{}$ defined as follows:
\begin{equation}
	\label{HHO-interpolant}
	\Interp v
	:=
	(\LebPro_\Mesh v, \LebPro_\Skeleton v).
\end{equation}

% Elliptic projection
As the reconstruction $\PotRec$ maps the elements of $\HHOspace{}$ into piecewise polynomials of degree $(\Degree+1)$, we compare, in particular, the approximation in the mapped space $\PotRec (\HHOspace{})$ with the one in the broken space $\Poly{\Degree+1}{\Mesh}$, with respect to the $H^1$-norm and the $L^2$-norm. For this purpose, we make use of the broken elliptic projection $\EllPro: \SobH{\Domain} \to \Poly{\Degree+1}{\Mesh}$, which is obtained by imposing
\begin{subequations}
	\label{elliptic-projection}
	\begin{equation}
		\label{elliptic-projection-grad}
		\forall q \in \Poly{\Degree+1}{K}
		\qquad
		\int_K \Grad \EllPro v \cdot \Grad q
		=
		\int_K \Grad v \cdot \Grad q
	\end{equation}
	and
	\begin{equation}
		\label{elliptic-projection-avg}
		\int_K \EllPro v
		=
		\int_K v
	\end{equation}
\end{subequations}
for all $K \in \Mesh$ and $v \in \SobH{\Domain}$. 

% Elliptic projection and reconstruction
Recall the definitions of $\PotRec$ and $\Interp$ from \eqref{gradient-reconstruction} and \eqref{HHO-interpolant}, respectively, and let $v \in \SobH{\Domain}$ be given. We have $\int_K \Grad \PotRec \Interp v \cdot \Grad q = -\int_K \LebPro_\Mesh v \Lapl q + \int_{\partial K} \LebPro_\Sigma v \Grad q \cdot \Normal_K = \int_K \Grad v \cdot \Grad q$ for all $K \in \Mesh$ and $q \in \Poly{\Degree+1}{K}$. Furthermore, $\int_K \PotRec \Interp v = \int_K \LebPro_\Mesh v = \int_K v$. Then, comparing with \eqref{potential-reconstruction} and \eqref{elliptic-projection}, we derive the identities
\begin{equation}
	\label{RI=E}
	\PotRec \circ \Interp = \EllPro
	\qquad \text{ and } \qquad
	\PotRecSt \circ \Interp = \EllPro + \LebPro_\Mesh (\id - \EllPro)
\end{equation} 
which can be used to assess the approximation properties of $\HHOspace{}$. 

% Interpolation errors
\begin{lemma}[Interpolation errors]
	\label{L:interpolation-errors}
	For all $v \in \SobH{\Domain}$, we have
	\begin{subequations}
		\label{interpolation-errors}
		\begin{equation}
			\label{interpolation-errors-H1-norm}
			\normLeb{\GradM (v-\PotRec \Interp v)}{\Domain}^2
			+
			\Stab(\Interp v,\Interp v)
			\Cleq
			\sum \limits_{K \in \Mesh}
			\inf \limits_{q \in \Poly{\Degree+1}{K}}
			\normLeb{\Grad(v-q)}{K}^2,
		\end{equation}
		\begin{equation}
			\label{interpolation-errors-L2}
			\normLeb{v-\PotRec \Interp v}{\Domain}^2
			\leq 
			\sum \limits_{K \in \Mesh}
			\left( \dfrac{h_K}{\pi}\right)^2  \inf \limits_{q \in \Poly{\Degree+1}{K}}
			\normLeb{\Grad (v-q)}{K}^2.
		\end{equation}
	\end{subequations}
\end{lemma}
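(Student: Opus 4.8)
The plan is to exploit the identities $\PotRec\circ\Interp=\EllPro$ and $\PotRecSt\circ\Interp=\EllPro+\LebPro_\Mesh(\id-\EllPro)$ from~\eqref{RI=E}, which reduce both estimates to well-understood approximation properties of the broken elliptic projection $\EllPro$ and the $L^2$-projections $\LebPro_\Mesh$, $\LebPro_\Sigma$. For the $H^1$-norm bound~\eqref{interpolation-errors-H1-norm} I would treat the two summands separately. The gradient term is immediate: by~\eqref{RI=E} we have $\GradM(v-\PotRec\Interp v)=\GradM(v-\EllPro v)$, and the defining orthogonality~\eqref{elliptic-projection-grad} makes $\Grad\EllPro v$ the $L^2$-orthogonal projection of $\Grad v$ onto $\Grad\Poly{\Degree+1}{K}$ on each cell, so that $\normLeb{\Grad(v-\EllPro v)}{K}=\inf_{q\in\Poly{\Degree+1}{K}}\normLeb{\Grad(v-q)}{K}$ cellwise. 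Summing over $K$ gives exactly the right-hand side (with constant $1$ for this piece).

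The stabilization term requires more care. Using~\eqref{HHO-stabilization} and~\eqref{RI=E}, on each $K$ and each $F\in\Faces{K}$ the quantity inside is $\LebPro_\Sigma\big((\LebPro_\Sigma v)_{|F}-(\PotRecSt\Interp v)_{|K}\big)=\LebPro_\Sigma\big(v-(\EllPro v)_{|K}-\LebPro_\Mesh(v-\EllPro v)_{|K}\big)$, where I have used that $\LebPro_\Sigma\LebPro_\Sigma v=\LebPro_\Sigma v$. Setting $w:=v-\EllPro v$ on $K$, the face term is $\LebPro_\Sigma\big(w-\LebPro_\Mesh w\big)_{|F}$, so by $L^2$-stability of $\LebPro_\Sigma$ it suffices to bound $h_F^{-1}\normLeb{w-\LebPro_\Mesh w}{F}^2$. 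Here I invoke the discrete trace inequality~\eqref{discrete-trace-inequality} applied to the polynomial $(w-\LebPro_\Mesh w)_{|K}\in\Poly{\Degree+1}{K}$ (note $\EllPro v$ and $\LebPro_\Mesh w$ are both piecewise polynomials of degree $\Degree+1$ on $K$), yielding $h_F^{-1}\normLeb{w-\LebPro_\Mesh w}{F}^2\Cleq h_K^{-2}\normLeb{w-\LebPro_\Mesh w}{K}^2$. Since $\LebPro_\Mesh$ removes the mean of $w$ on $K$ (and $\Poly{\Degree}{K}\ni$ constants), I can estimate $\normLeb{w-\LebPro_\Mesh w}{K}\le\normLeb{w-\LebPro^0_K w}{K}$ where $\LebPro^0_K$ is the cell average, and then the Poincar\'e--Steklov inequality~\eqref{poincare-inequality} gives $h_K^{-2}\normLeb{w-\LebPro^0_K w}{K}^2\le\pi^{-2}\normLeb{\Grad w}{K}^2=\pi^{-2}\normLeb{\Grad(v-\EllPro v)}{K}^2$. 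Combining with the characterization of $\Grad\EllPro v$ above and summing over $K$ and $F$ (the number of faces per cell is controlled by $\Shape$) closes~\eqref{interpolation-errors-H1-norm}.

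For the $L^2$-bound~\eqref{interpolation-errors-L2}, I again use $v-\PotRec\Interp v=v-\EllPro v$. The normalization~\eqref{elliptic-projection-avg} guarantees $\int_K(v-\EllPro v)=0$ on every $K$, so Poincar\'e--Steklov~\eqref{poincare-inequality} gives $\normLeb{v-\EllPro v}{K}\le\pi^{-1}h_K\normLeb{\Grad(v-\EllPro v)}{K}=\pi^{-1}h_K\inf_{q\in\Poly{\Degree+1}{K}}\normLeb{\Grad(v-q)}{K}$, and squaring and summing over $K\in\Mesh$ gives exactly~\eqref{interpolation-errors-L2} with the displayed constant. The main obstacle is really the bookkeeping in the stabilization term — specifically making sure that the argument inside $\LebPro_\Sigma$ is a genuine piecewise polynomial on $K$ so that the discrete trace inequality applies, and that the mean-value cancellations are correctly tracked through $\LebPro_\Mesh$ and $\PotRecSt$; once the reductions via~\eqref{RI=E} are in place, the rest is a routine chain of trace and Poincar\'e estimates with $\Shape$- and $\Degree$-dependent constants.
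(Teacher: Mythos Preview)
Your treatment of the gradient term in \eqref{interpolation-errors-H1-norm} and the whole of \eqref{interpolation-errors-L2} is fine and matches the paper. The gap is in your handling of the stabilization term.

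You correctly reduce the face quantity to $\LebPro_\Sigma(w-\LebPro_\Mesh w)$ with $w:=v-\EllPro v$, and you correctly note that by $L^2$-stability of $\LebPro_\Sigma$ it suffices to bound $h_F^{-1}\normLeb{w-\LebPro_\Mesh w}{F}^2$. The error is the next step: you invoke the discrete trace inequality \eqref{discrete-trace-inequality} ``applied to the polynomial $(w-\LebPro_\Mesh w)_{|K}\in\Poly{\Degree+1}{K}$''. But $w-\LebPro_\Mesh w$ is \emph{not} a polynomial on $K$: since $w=v-\EllPro v$ and $v$ is only in $H^1_0(\Omega)$, the function $w$ (and hence $w-\LebPro_\Mesh w$) is non-polynomial whenever $v$ is. Your parenthetical ``$\EllPro v$ and $\LebPro_\Mesh w$ are both piecewise polynomials'' does not help, because the raw $v$ is still present. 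Consequently \eqref{discrete-trace-inequality} is not applicable and the argument breaks here.

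The fix is exactly what the paper does: split $w-\LebPro_\Mesh w$ into the non-polynomial part $w=v-\EllPro v$ and the polynomial part $\LebPro_\Mesh w$. For the first part use the \emph{continuous} trace inequality \eqref{trace-inequality} together with $\int_K w=0$ (from \eqref{elliptic-projection-avg}) and Poincar\'e--Steklov \eqref{poincare-inequality} to get $h_F^{-1/2}\normLeb{w}{F}\leq C_\Shape\normLeb{\Grad w}{K}$. For the second part, which \emph{is} a polynomial of degree $\le\Degree$, use the discrete trace inequality \eqref{discrete-trace-inequality}, then $L^2$-stability of $\LebPro_\Mesh$, and again $\int_K w=0$ with \eqref{poincare-inequality} to get $h_F^{-1/2}\normLeb{\LebPro_\Mesh w}{F}\leq C_{\Shape,\Degree}\normLeb{\Grad w}{K}$. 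Both pieces then combine with \eqref{elliptic-projection-orthogonality} as you intended.
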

\begin{proof}
The proof follows from \cite{DiPietro.Ern.Lemaire:14} and is briefly sketched for completeness.
Let $v \in \SobH{\Domain}$ be given. The first summand in the left-hand side of \eqref{interpolation-errors-H1-norm} can be rewritten using the first part of \eqref{RI=E} and the element-wise $H^1$-orthogonality of $\EllPro$, which imply that
\begin{equation}
\label{elliptic-projection-orthogonality}
\normLeb{\Grad(v-\PotRec \Interp v)}{K}
=
\normLeb{\Grad(v-\EllPro v)}{K}
=
\inf_{q \in \Poly{\Degree+1}{K}} 
\normLeb{\Grad(v-q)}{K}
\end{equation}
for all $K \in \Mesh$. Concerning the other summand, the second part of \eqref{RI=E} reveals $\PotRecSt \Interp v = \EllPro v + \LebPro_\Mesh(v - \EllPro v)$. Inserting this identity into \eqref{HHO-stabilization}, we infer that 
\begin{equation}
	\label{stabilization+interpolation}
	\Stab(\Interp v,\Interp v)
	=
	\sum \limits_{K \in \Mesh} \sum \limits_{F \in \Faces{K}}
	h_F^{-1} \normLeb{\LebPro_\Skeleton(v- (\EllPro v)_{|K}) + \LebPro_\Mesh(v- \EllPro v)_{|K}}{F}^2.
\end{equation}
Consider any $K \in \Mesh$ and $F \in \Faces{K}$. We exploit the boundedness of $\LebPro_\Skeleton$ in the $L^2(F)$-norm, the trace inequality \eqref{trace-inequality}, the identity \eqref{elliptic-projection-avg}, and the Poincar\'{e}--Steklov inequality \eqref{poincare-inequality} to infer that
\begin{equation*}
h_F^{-\frac{1}{2}} \normLeb{\LebPro_\Skeleton(v- (\EllPro v)_{|K})}{F}
\leq C_\Shape
\normLeb{\Grad (v - \EllPro v)}{K}.
\end{equation*}
Next, we invoke the discrete trace inequality \eqref{discrete-trace-inequality}, the boundedness of $\LebPro_\Mesh$ in the $L^2(K)$-norm, the identity \eqref{elliptic-projection-avg}, and the Poincar\'{e}--Steklov inequality \eqref{poincare-inequality} to obtain
\begin{equation*}
h_F^{-\frac{1}{2}} \normLeb{\LebPro_\Mesh(v-\EllPro v)_{|K}}{F}
\leq C_{\Shape, p}
\normLeb{\Grad (v - \EllPro v)}{K}.
\end{equation*}
Hence, we derive the claimed bound \eqref{interpolation-errors-H1-norm} inserting these estimates into \eqref{stabilization+interpolation} and using again \eqref{elliptic-projection-orthogonality}. Finally, the identity \eqref{elliptic-projection-avg}, the first identity in \eqref{RI=E}, and the Poincar\'{e}--Steklov inequality \eqref{poincare-inequality} yield
\begin{equation*}
	\label{best-errors-proof}
	\normLeb{v - \PotRec \Interp v}{\Domain}^2
	\leq 
	\sum \limits_{K \in \Mesh} \left( \dfrac{h_K}{\pi}\right)^2  \normLeb{\Grad (v- \EllPro v)}{K}^2.
\end{equation*}
This proves \eqref{interpolation-errors-L2}, in combination with \eqref{elliptic-projection-orthogonality}.
\end{proof}

% Comments on the interpolation estimates
The first estimate in Lemma~\ref{L:interpolation-errors} has the remarkable property that the left- and the right-hand sides are equivalent, according to the inclusion $\PotRec \Interp v \in \Poly{\Degree+1}{\Mesh}$. The other estimate does not enjoy the same property, because the right-hand side requires higher regularity of $v$ than the left-hand side. Note also that, in both estimates, the right-hand side involves only local best errors on the simplices of $\Mesh$, in the spirit of \cite{Veeser:16}. This entails that the best approximation in $\HHOspace{}$ needs only piecewise (and not global) regularity of $v$ to achieve convergence with a certain decay rate. 

% Interpolation estimates as benchmarks
Both estimates in Lemma~\ref{L:interpolation-errors} are possible benchmarks for any approximation method based on the HHO space. Indeed, if $u \in \SobH{\Domain}$ solves \eqref{Poisson} and $\hU \in \HHOspace{}$ is the approximation resulting from a given HHO method, one may ask whether $\hU$ fulfills the same error bounds as $\Interp u$, possibly up to more pessimistic constants. This would guarantee that the method under examination reproduces the approximation properties of the underlying space. 

% Instability of the HHO method
Unfortunately, the $H^1$- and the $L^2$-norm errors of the HHO method \eqref{HHO-method-standard} cannot be bounded like the corresponding interpolation errors in Lemma~\ref{L:interpolation-errors}. In fact, although it is certainly possible to relax the assumption that the load $f$ is in $\Leb{\Domain}$, as done in \cite{ErnGu:18}, the duality $\int_\Domain f \sigma_\Mesh$  in the right-hand side of \eqref{HHO-method-standard} cannot be continuously extended to general loads $f \in H^{-1} (\Domain)$ and arbitrary discrete test functions $\hsigma = (\sigma_\Mesh, \sigma_\Skeleton) \in \HHOspace{}$, because we possibly have $\sigma_\Mesh \notin H^1_0(\Domain)$. As a consequence, any error bound of \eqref{HHO-method-standard} must involve additional regularity beyond $f \in H^{-1}(\Domain)$ and $u \in \SobH{\Domain}$.  
%
% Next step
Motivated by this observation, we aim at designing a variant of \eqref{HHO-method-standard} with improved approximation properties.

%---------------------------------------------------------------
%---------------------------------------------------------------

\section{A quasi-optimal variant of the HHO method}
\label{S:quasi-optimal-HHO}

% Short intro
In this section we exploit the abstract framework of section~\ref{S:abstract-framework} to design a new HHO method, which is quasi-optimal for \eqref{Poisson} in the semi-norm involved in the left-hand side of \eqref{interpolation-errors-H1-norm}. According to Remark~\ref{R:smoothing}, this requires, in particular, the use of a smoother in the discretization of the load. Hence, we first point out a condition on the smoother that is sufficient for quasi-optimality. Then, we construct a smoother fulfilling such a condition and derive broken $H^1$- and $L^2$-norm error estimates.  

\subsection{The HHO method with smoothing}
\label{SS:HHO-with-smoothing}

% Continuous problem and abstract setting
Let $\Mesh$ be the simplicial mesh introduced in section~\ref{SS:discrete-problem} and recall that the space $\HHOspace{}$ consists of pairs $\hs=(s_\Mesh, s_\Skeleton)$, where the first component $s_\Mesh \in \Poly{\Degree}{\Mesh}$ is an element-wise polynomial on $\Mesh$, whereas the second component $s_\Sigma \in \Poly{\Degree}{\Skeleton}$ is a face-wise polynomial on $\Skeleton$. Since the abstract framework of section~\ref{S:abstract-framework} involves the sum of continuous and discrete spaces, it is formally convenient to identify any element $v \in \SobH{\Domain}$ with the pair $\hv:=(v, v_{|\Skeleton})$, where $v_{|\Skeleton}$ denotes the trace of $v$ on $\Skeleton$. In fact, the Poisson problem \eqref{Poisson} fits into the abstract elliptic problem \eqref{cont-prob-abstract} provided we rewrite it as follows:
\begin{equation}
\label{cont-prob-abstract-HHO}
\text{Given } \; \ell_{\HHO} \in \DualSp{\SpaceContHHO}, 
\; \text{ find } \; \hu \in \SpaceContHHO \;\text{ s.t. }\;
\forall \hw \in \SpaceContHHO \quad a_{\HHO}(\hu,\hw) = \left\langle \ell_{\HHO}, \hw\right\rangle_{\DualSp{\SpaceContHHO} \times \SpaceContHHO},
\end{equation}
with the space
\begin{subequations}
\label{cont-prob-concrete}
\begin{equation}
\label{cont-prob-concrete-space}
\SpaceContHHO
:=
\{ \hv=(v_\Mesh, v_\Skeleton) \mid v_\Mesh \in \SobH{\Domain}, \;v_\Skeleton = (v_\Mesh)_{|\Skeleton} \} 
\end{equation}
and the forms
\begin{equation}
\label{cont-prob-concrete-form}
a_\HHO(\hv,\hw)
:=
\int_\Domain \Grad v_\Mesh \cdot \Grad w_\Mesh 
\end{equation}
\begin{equation}
\label{cont-prob-concrete-load}
\left\langle \ell_{\HHO}, \hw \right\rangle_{\DualSp{\SpaceContHHO} \times \SpaceContHHO} 
:= 
\left\langle f, w_\Mesh \right\rangle_{\SobHD{\Domain} \times \SobH{\Domain}}
\end{equation}	
\end{subequations}
where $\hv=(v_\Mesh, v_\Skeleton)$ and  $\hw=(w_\Mesh, w_\Skeleton)$ are in $\SpaceContHHO$ and $f \in \SobHD{\Domain}$ is the load in \eqref{Poisson}. This way of looking at the model problem \eqref{cont-prob-abstract} is instrumental to the derivation of Proposition~\ref{P:quasi-optimality-HHO}, although it may appear a bit artificial at first glance. 
%The interpolant $\Interp$ and the elliptic projection $\EllPro$ from \eqref{HHO-interpolant} and \eqref{elliptic-projection}, respectively, can be defined on $\SpaceContHHO$ by letting them act on the cell component.
 
% Intersection continuous/discrete space
The intersection of $\SpaceContHHO$ and the HHO space can be characterized as follows:
\begin{equation*}
\label{intersection}
\SpaceContHHO \cap \HHOspace{} 
=
\{
\hv = (v_\Mesh, v_\Skeleton) \in \SpaceContHHO 
\mid
v_\Mesh \in \Poly{\Degree}{\Mesh}
\}.
\end{equation*}
In particular, any element $\hv = (v_\Mesh, v_\Skeleton) \in \SpaceContHHO \cap \HHOspace{}$ satisfies 
\begin{equation}
\label{intersection-properties}
\Interp v_\Mesh = \hv, \quad \EllPro v_\Mesh = v_\Mesh, \quad
\PotRec \hv = v_\Mesh, \quad
\PotRecSt \hv = v_\Mesh, \quad
\Stab(\hv, \cdot) = 0.
\end{equation}
%as a consequence of \eqref{RI=E}. 

% Extended scalar product
Proceeding as in section~\ref{S:abstract-framework}, we look for a symmetric bilinear form $\aextHHO$ on $\SpaceContHHO + \HHOspace{}$ such that $\aext_{\HHO|\SpaceContHHO} = a_\HHO$ and $\aext_{\HHO|\HHOspace{}} = \HHObform$. In other words, we require that $\aextHHO$ is a common extension of $a_\HHO$ and $\HHObform$.
%This actually overdetermines $\aext$.
It is readily seen that we must have 
\begin{equation}
\label{aext-HHO}
\aextHHO(\hv+\hs, \hw+\hsigma)
:=
\int_\Domain \GradM (v_\Mesh + \PotRec \hs) 
\cdot \GradM (w_\Mesh + \PotRec \hsigma)
+
\Stab(\hs,\hsigma),
\end{equation}
for all $\hv,\hw \in \SpaceContHHO$ and $\hs,\hsigma \in \HHOspace{}$. To check that $\aextHHO$ is indeed well-defined, assume that $\hv+\hs = \hv' + \hs'$ for some $\hv' \in \SpaceContHHO$ and $\hs' \in \HHOspace{}$. Then, we have $\hv-\hv' = \hs' - \hs \in \SpaceContHHO \cap \HHOspace{}$, so that \eqref{intersection-properties} implies $v_\Mesh - v'_\Mesh = \PotRec(\hv - \hv') = \PotRec(\hs'  - \hs)$ and $\Stab(\hs - \hs',\cdot) = 0$. Rearranging terms, we infer that $\aextHHO(\hv+\hs, \cdot) = \aextHHO(\hv' + \hs', \cdot)$. This observation and the symmetry of $\aextHHO$ confirm our claim. 

% Discrete problem
Let $\HHOsmt: \HHOspace{} \to \SobH{\Domain}$ be a linear operator. Motivated by Remark~\ref{R:smoothing}, we consider the following variant of the HHO method \eqref{HHO-method-standard}:
\begin{equation}
\label{HHO-method-modified}
\begin{split}
\text{Given } \; f \in \SobHD{\Domain}, 
\; \text{find } \; \hU \in \HHOspace{} 
\;\text{ s.t. } \quad\\
\forall \hsigma \in \HHOspace{}
\;\:
\HHObform(\hU, \hsigma)
=
\left\langle f, \HHOsmt \hsigma \right\rangle_{\SobHD{\Domain} \times \SobH{\Domain}}.
\end{split}
\end{equation}
Note, in particular, that here the right-hand side is defined for all $f \in \SobHD{\Domain}$. 

% Connection with the abstract setting
The new HHO method \eqref{HHO-method-modified} fits into the abstract discrete problem \eqref{disc-prob-abstract} with
\begin{equation}
	\label{disc-prob-concrete}
	\SpaceDisc = \HHOspace{}
	\qquad \qquad
	\aext = \aextHHO
	\qquad \qquad
	\Smt \hsigma = \hHHOsmt\hsigma := ( \HHOsmt \hsigma, (\HHOsmt \hsigma)_{|\Skeleton} )
\end{equation}
so that $\hHHOsmt : \HHOspace{} \rightarrow \SpaceContHHO$. Since $\HHOspace{} \nsubseteq \SpaceContHHO$, this is a nonconforming method. 

%---------------------------------------------------------------
\subsection{Quasi-optimality}
\label{SS:quasi-optimality}

% Extended energy norm, Degeneracy of aext
The extended energy semi-norm induced by the extended bilinear form $\aextHHO$ is
\begin{equation*}
\label{extended-seminorm}
\normHHO{\hv+\hs} := \sqrt{\aextHHO(\hv+\hs, \hv+\hs)}
\end{equation*}
with $\hv \in \SpaceContHHO$  and $\hs \in \HHOspace{}$.  
This is the unique common extension of the energy norm induced by $a_\HHO$ and the discrete norm induced by $\HHObform$.
We now aim at determining the properties of $\Smt_\HHO$ that are relevant for the quasi-optimality of \eqref{HHO-method-modified} in the semi-norm $\normHHO{\cdot}$. For this purpose, an important preliminary observation is that the setting proposed above does not fit into the abstract framework of section~\ref{S:abstract-framework}. In fact, the extended bilinear form $\aextHHO$ is only positive semi-definite on the sum $\SpaceContHHO + \HHOspace{}$, although its restrictions to $\SpaceContHHO$ and $\HHOspace{}$ are indeed positive definite. The following result makes our claim more precise. 

\begin{lemma}[Kernel of $\normHHO{\cdot}$]
	\label{L:degeneracy}
	We have $\normHHO{\hv-\hs} = 0$ for $\hv=(v_\Mesh, v_\Skeleton) \in \SpaceContHHO$ and $\hs \in \HHOspace{}$ if and only if $v_\Mesh \in \Poly{\Degree+1}{\Mesh}$ and $\hs = \Interp v_\Mesh$. 
\end{lemma}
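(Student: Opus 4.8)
The plan is to unwind the definition \eqref{aext-HHO} of $\aextHHO$ evaluated on the diagonal at $\hv-\hs$, using the fact (established just before Lemma~\ref{L:degeneracy}) that the form is well-defined and symmetric. Writing $\hv=(v_\Mesh,v_\Skeleton)$ with $v_\Mesh\in\SobH{\Domain}$ and $\hs=(s_\Mesh,s_\Skeleton)\in\HHOspace{}$, one has
\begin{equation*}
\normHHO{\hv-\hs}^2
=
\normLeb{\GradM(v_\Mesh-\PotRec\hs)}{\Domain}^2
+
\Stab(\hs,\hs),
\end{equation*}
so that $\normHHO{\hv-\hs}=0$ if and only if the two nonnegative summands vanish separately, i.e.\ $\GradM(v_\Mesh-\PotRec\hs)=0$ on every $K\in\Mesh$ and $\Stab(\hs,\hs)=0$.

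For the \emph{sufficiency} direction, assume $v_\Mesh\in\Poly{\Degree+1}{\Mesh}$ and $\hs=\Interp v_\Mesh$. By the first identity in \eqref{RI=E}, $\PotRec\hs=\PotRec\Interp v_\Mesh=\EllPro v_\Mesh$; since $v_\Mesh$ is already an element-wise polynomial of degree $\Degree+1$, the $H^1$-orthogonality of $\EllPro$ (used as in \eqref{elliptic-projection-orthogonality}) gives $\GradM(v_\Mesh-\EllPro v_\Mesh)=0$, hence the first summand vanishes. For the second summand, combine the second identity in \eqref{RI=E} with $\EllPro v_\Mesh=v_\Mesh$ to get $\PotRecSt\Interp v_\Mesh=v_\Mesh+\LebPro_\Mesh(v_\Mesh-v_\Mesh)=v_\Mesh$, and the face component of $\Interp v_\Mesh$ is $\LebPro_\Skeleton v_\Mesh=v_\Mesh|_\Skeleton$ (as $v_\Mesh$ is continuous and piecewise polynomial of the right degree). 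Inserting into \eqref{HHO-stabilization} makes each face term $\LebPro_\Skeleton(v_\Mesh|_F-(v_\Mesh)|_K)=0$, so $\Stab(\Interp v_\Mesh,\Interp v_\Mesh)=0$.

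For the \emph{necessity} direction, suppose $\normHHO{\hv-\hs}=0$. From $\GradM(v_\Mesh-\PotRec\hs)=0$ on each $K$ we get that $v_\Mesh-\PotRec\hs$ is constant on each cell; combined with $\PotRec\hs\in\Poly{\Degree+1}{\Mesh}$ this forces $v_\Mesh\in\Poly{\Degree+1}{\Mesh}$, which is the first assertion. It remains to show $\hs=\Interp v_\Mesh$. Apply the interpolant identities \eqref{intersection-properties}: since $v_\Mesh\in\Poly{\Degree+1}{\Mesh}$ we do \emph{not} immediately get $\EllPro v_\Mesh=v_\Mesh$ unless the degrees match, but here $\PotRec$ reconstructs into degree $\Degree+1$, so it is legitimate — for $v_\Mesh\in\Poly{\Degree+1}{\Mesh}$ one has $\EllPro v_\Mesh=v_\Mesh$ directly from \eqref{elliptic-projection}. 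Now use the well-definedness argument preceding the lemma in reverse: the cell-by-cell constant $v_\Mesh-\PotRec\hs$ has zero mean on each $K$ because $\int_K\PotRec\hs=\int_K s_\Mesh$ by \eqref{gradient-reconstruction-avg} — so to pin down $s_\Mesh$ we additionally need that this constant is zero, which follows once we know $\int_K v_\Mesh=\int_K s_\Mesh$. The clean way is: $\Stab(\hs,\hs)=0$ together with Lemma~\ref{L:discrete-stability} applied to $\hs-\Interp v_\Mesh$ — no, more directly, set $\hz:=\hs-\Interp v_\Mesh\in\HHOspace{}$; then $\PotRec\hz=\PotRec\hs-\EllPro v_\Mesh=\PotRec\hs-v_\Mesh$ is cell-wise constant, so $\normLeb{\GradM\PotRec\hz}{\Domain}=0$, and $\Stab(\hz,\hz)=\Stab(\hs,\hs)=0$ since $\Stab(\Interp v_\Mesh,\cdot)=0$ by the sufficiency computation above. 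Hence $\HHObform(\hz,\hz)=0$, and Lemma~\ref{L:discrete-stability} yields $\norm{\hz}_{\HHOspace{}}=0$, i.e.\ $\hz=0$, which is exactly $\hs=\Interp v_\Mesh$.

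The main obstacle, and the point that deserves the most care, is the necessity direction: from vanishing of the \emph{broken} gradient of $v_\Mesh-\PotRec\hs$ one must rule out a nonzero cell-wise constant. The trick above — passing to $\hz=\hs-\Interp v_\Mesh$ and invoking the coercivity Lemma~\ref{L:discrete-stability} together with the fact that the stabilization annihilates $\Interp v_\Mesh$ — handles both the constant and the identification of $\hs$ in one stroke, so I would organize the argument around $\hz$ rather than manipulating $v_\Mesh$ and $\hs$ separately.
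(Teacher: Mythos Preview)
Your argument is essentially the paper's: both directions hinge on \eqref{RI=E}, and the necessity step is settled by showing $\HHObform(\hs-\Interp v_\Mesh,\hs-\Interp v_\Mesh)=0$ and invoking Lemma~\ref{L:discrete-stability} (the paper reaches the same point via the triangle inequality for $\normHHO{\cdot}$, which is just a repackaging of your computation with $\hz$).

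One slip to fix in the sufficiency direction: the face component of $\Interp v_\Mesh$ is $\LebPro_\Skeleton v_\Mesh$, but this is \emph{not} $v_\Mesh|_\Skeleton$, since $v_\Mesh|_F$ has degree $\Degree+1$ while $\LebPro_\Skeleton$ projects onto degree $\Degree$. The stabilization still vanishes, though, because the outer $\LebPro_\Sigma$ in \eqref{HHO-stabilization} gives, on each face $F\in\Faces{K}$,
\[
\LebPro_\Sigma\bigl(\LebPro_\Skeleton v_\Mesh - (v_\Mesh)_{|K}\bigr)
= \LebPro_\Sigma v_\Mesh - \LebPro_\Sigma v_\Mesh = 0,
\]
using that $v_\Mesh\in\SobH{\Domain}$ so $(v_\Mesh)_{|K}|_F=v_\Mesh|_F$. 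With this correction your proof is complete.
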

\begin{proof}
Assume first that $v_\Mesh \in \Poly{\Degree+1}{\Mesh}$ and $\hs = \Interp v_\Mesh$. Owing to \eqref{RI=E}, we have $\PotRec \hs = v_\Mesh=\PotRecSt \hs$. The first identity implies that $\normLeb{\GradM(v_\Mesh - \PotRec \hs)}{\Domain} = 0$. The second one and the fact that $v_\Mesh \in \SobH{\Domain}$ reveal that $\Stab(\hs,\hs) = 0$. We conclude that $\normHHO{\hv-\hs} = 0$.

Conversely, assume that $\hv \in \SpaceContHHO$ and $\hs \in \HHOspace{}$ are such that $\normHHO{\hv-\hs} = 0$. This implies, in particular, that $\GradM(v_\Mesh - \PotRec \hs) = 0$. Therefore, we have $v_\Mesh \in \Poly{\Degree+1}{\Mesh}$. Hence, arguing as above, we infer the identity $\normHHO{\hv-\Interp v_\Mesh} = 0$, and the triangle inequality yields $\normHHO{\hs-\Interp v_\Mesh} = 0$. Since $\normHHO{\cdot}$ coincides with the norm induced by $\HHObform$ on $\HHOspace{}$, we conclude that $\hs = \Interp v_\Mesh$, owing to the coercivity of $\HHObform$ stated in Lemma~\ref{L:discrete-stability}. 
\end{proof}

% Comments on the degeneracy
\begin{remark}[Degeneracy of $\aextHHO$]
\label{R:degeneracy}
Let $\hv \in \SpaceContHHO$ and $\hs \in \HHOspace{}$ be such that $\normHHO{\hv-\hs}= 0$. The 'only if' part of Lemma~\ref{L:degeneracy} entails that we have two possibilities. If the cell component $v_\Mesh$ of $\hv$ is in $\Poly{\Degree}{\Mesh}$, then we have $\hv = \Interp v_\Mesh = \hs$. If, instead, $v_\Mesh \in \Poly{\Degree+1}{\Mesh} \setminus \Poly{\Degree}{\Mesh}$, then we have $\hv \neq \hs$, because $\hv$ is not in $\HHOspace{}$. On the one hand, this confirms that $\aextHHO$ is not positive definite on $\SpaceContHHO + \HHOspace{}$. On the other hand, we see that the difference $\hv-\hs$ is a nonzero element in the kernel of $\normHHO{\cdot}$ if and only if $\hv$ and $\hs$ are different pairs but $v_\Mesh$ coincides with the reconstruction of $\hs$. This originates from the fact that $\HHOspace{}$ is mapped by $\PotRec$ into a different space, which is 'one degree higher'.
\end{remark}

% Dealing with the degeneracy
One possibility to deal with the degeneracy of $\aextHHO$ would be to take the quotient of $\SpaceContHHO + \HHOspace{}$ over the kernel of $\normHHO{\cdot}$. Another, actually equivalent, option is to replace the intersection $\SpaceContHHO \cap \HHOspace{}$ in the consistency condition \eqref{full-consistency} with the space of all pairs in $\SpaceContHHO$ whose distance to $\HHOspace{}$ vanishes in the semi-norm $\normHHO{\cdot}$, i.e. 
\begin{equation}
\label{kernel}
\hZ  :=
\{ \hz \in \SpaceContHHO \mid \inf_{\hs \in \HHOspace{}}\normHHO{\hz-\hs} = 0  \}
=
\{ \hz  \in \SpaceContHHO \mid \normHHO{\hz-\Interp z_\Mesh} = 0  \}.
\end{equation}
Notice that the second equality follows from Lemma~\ref{L:degeneracy}. 

Quasi-optimality in the semi-norm $\normHHO{\cdot}$ prescribes that the error of \eqref{HHO-method-modified} vanishes whenever the corresponding solution of \eqref{cont-prob-abstract-HHO} belongs to $\hZ$. This is a more restrictive consistency condition than \eqref{full-consistency}, because $\SpaceContHHO \cap \HHOspace{}$ is a strict subspace of $\hZ$.    

% New consistency condition
\begin{lemma}[Consistency conditions]
\label{L:algebraic-consistency}
Assume that $\hu \in \SpaceContHHO$ solves the problem \eqref{cont-prob-abstract-HHO} and denote by $\hU \in \HHOspace{}$ the solution of \eqref{HHO-method-modified}. The following conditions are equivalent:	
\begin{subequations}
\label{algebraic-consistency}
\begin{align}
\label{algebraic-consistecy-seminorm}
&\hu \in \hZ  
\quad \Longrightarrow \quad 
\normHHO{\hu-\hU} = 0\\
\label{algebraic-consistency-interp}
&\hu \in \hZ
\quad \Longrightarrow \quad
\hU = \Interp u_\Mesh\\
\label{algebraic-consistency-bform}
&\hu \in \hZ
\quad \Longrightarrow \quad
\left( \;\forall \hsigma \in \HHOspace{}, \quad 
\aextHHO(\hu, \hsigma-\hHHOsmt\hsigma) = 0 \;\right) 
\end{align}
\end{subequations}
and are necessary for quasi-optimality in the semi-norm $\normHHO{\cdot}$.
\end{lemma}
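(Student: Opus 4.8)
The plan is to prove the three implications are mutually equivalent by a cyclic chain, and then to observe that each (equivalently, all) is forced by quasi-optimality. First I would establish \eqref{algebraic-consistecy-seminorm}$\Leftrightarrow$\eqref{algebraic-consistency-interp}. Assuming $\hu\in\hZ$, by the definition \eqref{kernel} of $\hZ$ together with Lemma~\ref{L:degeneracy} we have $\normHHO{\hu-\Interp u_\Mesh}=0$, and in fact $u_\Mesh\in\Poly{\Degree+1}{\Mesh}$. Now $\normHHO{\hu-\hU}=0$ combined with $\normHHO{\hu-\Interp u_\Mesh}=0$ yields, by the triangle inequality, $\normHHO{\hU-\Interp u_\Mesh}=0$; since both $\hU$ and $\Interp u_\Mesh$ lie in $\HHOspace{}$ and $\normHHO{\cdot}$ restricts to the genuine norm induced by $\HHObform$ there (Lemma~\ref{L:discrete-stability}), this forces $\hU=\Interp u_\Mesh$. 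The reverse implication is immediate: if $\hU=\Interp u_\Mesh$ then $\normHHO{\hu-\hU}=\normHHO{\hu-\Interp u_\Mesh}=0$ by membership of $\hu$ in $\hZ$.

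Next I would show \eqref{algebraic-consistency-interp}$\Leftrightarrow$\eqref{algebraic-consistency-bform}. The starting point is that $\hU$ is characterized by \eqref{HHO-method-modified}, i.e. $\HHObform(\hU,\hsigma)=\langle f,\HHOsmt\hsigma\rangle=\langle\ell_\HHO,\hHHOsmt\hsigma\rangle$ for all $\hsigma\in\HHOspace{}$, while $\hu$ solves \eqref{cont-prob-abstract-HHO}, so $\langle\ell_\HHO,\hHHOsmt\hsigma\rangle=a_\HHO(\hu,\hHHOsmt\hsigma)=\aextHHO(\hu,\hHHOsmt\hsigma)$ (using $\hHHOsmt\hsigma\in\SpaceContHHO$ and $\aext_{\HHO|\SpaceContHHO}=a_\HHO$). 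Hence for every $\hsigma\in\HHOspace{}$,
\begin{equation*}
\aextHHO(\hU,\hsigma)-\aextHHO(\hu,\hsigma)
=\HHObform(\hU,\hsigma)-\aextHHO(\hu,\hsigma)
=\aextHHO(\hu,\hHHOsmt\hsigma)-\aextHHO(\hu,\hsigma)
=-\aextHHO(\hu,\hsigma-\hHHOsmt\hsigma).
\end{equation*}
Thus the condition $\aextHHO(\hu,\hsigma-\hHHOsmt\hsigma)=0$ for all $\hsigma$ is precisely $\aextHHO(\hu-\hU,\hsigma)=0$ for all $\hsigma\in\HHOspace{}$, i.e. $\aextHHO(\hU-\hu,\hsigma)=0$ for all $\hsigma$. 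Now when $\hu\in\hZ$ we have $u_\Mesh\in\Poly{\Degree+1}{\Mesh}$ and, by \eqref{RI=E}, $\PotRec\Interp u_\Mesh=\EllPro u_\Mesh=u_\Mesh$ and $\PotRecSt\Interp u_\Mesh=u_\Mesh$, so that for any $\hsigma\in\HHOspace{}$ the definition \eqref{aext-HHO} gives $\aextHHO(\hu,\hsigma)=\int_\Domain\GradM u_\Mesh\cdot\GradM\PotRec\hsigma=\int_\Domain\GradM\PotRec\Interp u_\Mesh\cdot\GradM\PotRec\hsigma=\HHObform(\Interp u_\Mesh,\hsigma)-\Stab(\Interp u_\Mesh,\hsigma)=\HHObform(\Interp u_\Mesh,\hsigma)$, the last step because $u_\Mesh\in\SobH{\Domain}$ forces $\Stab(\Interp u_\Mesh,\cdot)=0$ (as in the proof of Lemma~\ref{L:degeneracy}). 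Therefore $\aextHHO(\hU-\hu,\hsigma)=\HHObform(\hU-\Interp u_\Mesh,\hsigma)$ for all $\hsigma\in\HHOspace{}$, and since $\HHObform$ is positive definite on $\HHOspace{}$, this vanishes for all $\hsigma$ if and only if $\hU=\Interp u_\Mesh$. This closes the cycle.

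Finally, for necessity for quasi-optimality: if \eqref{HHO-method-modified} is quasi-optimal in $\normHHO{\cdot}$, then $\normHHO{\hu-\hU}\le C\inf_{\hs\in\HHOspace{}}\normHHO{\hu-\hs}$; when $\hu\in\hZ$ the right-hand side is $0$ by definition of $\hZ$, hence $\normHHO{\hu-\hU}=0$, which is \eqref{algebraic-consistecy-seminorm}. The main subtlety, and the place I would be most careful, is the interplay between the semi-norm $\normHHO{\cdot}$ being only positive \emph{semi}-definite on $\SpaceContHHO+\HHOspace{}$ and the fact that it is a genuine norm on $\HHOspace{}$ alone: every step that deduces equality of two elements from the vanishing of $\normHHO{\cdot}$ must take place entirely within $\HHOspace{}$, which is why the reductions to $\HHObform$-positive-definiteness (via Lemma~\ref{L:discrete-stability}) and the repeated use of the identities \eqref{RI=E} and $\Stab(\Interp v,\cdot)=0$ for $v\in\SobH{\Domain}$ are essential rather than cosmetic.
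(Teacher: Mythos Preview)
Your proof is correct and follows essentially the same route as the paper's: both arguments reduce condition \eqref{algebraic-consistency-bform} to $\HHObform(\Interp u_\Mesh-\hU,\hsigma)=0$ for all $\hsigma$, then invoke the positive-definiteness of $\HHObform$ on $\HHOspace{}$ for \eqref{algebraic-consistency-interp}$\Leftrightarrow$\eqref{algebraic-consistency-bform}, and use Lemma~\ref{L:degeneracy} for \eqref{algebraic-consistecy-seminorm}$\Leftrightarrow$\eqref{algebraic-consistency-interp}. The only stylistic difference is that the paper obtains $\aextHHO(\hu,\hsigma)=\HHObform(\Interp u_\Mesh,\hsigma)$ in one stroke from $\normHHO{\hu-\Interp u_\Mesh}=0$ (Cauchy--Schwarz for the positive semi-definite form $\aextHHO$ gives $\aextHHO(\hu-\Interp u_\Mesh,\cdot)=0$), whereas you compute it by hand from the definition \eqref{aext-HHO} and the identities \eqref{RI=E}.

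One small slip in your closing commentary: the assertion ``$\Stab(\Interp v,\cdot)=0$ for $v\in\SobH{\Domain}$'' is \emph{not} true in general (cf.\ Lemma~\ref{L:interpolation-errors}, where $\Stab(\Interp v,\Interp v)$ is merely bounded); what you actually used, correctly, is that it holds when additionally $v\in\Poly{\Degree+1}{\Mesh}\cap\SobH{\Domain}$, which is guaranteed by $\hu\in\hZ$ via Lemma~\ref{L:degeneracy}.
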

\begin{proof}
Let $\hu \in \hZ$. The second identity in \eqref{kernel} entails that $\aextHHO(\hu-\Interp u_\Mesh, \cdot) = 0$. Comparing also \eqref{cont-prob-abstract-HHO} with \eqref{HHO-method-modified} and recalling that $\aextHHO$ extends $\HHObform$, we see that
\begin{align*}
& \HHObform(\Interp u_\Mesh,\hsigma) = \aextHHO(\Interp u_\Mesh,\hsigma) = \aextHHO(\hu,\hsigma), \\
& \HHObform(\hU,\hsigma) = \left\langle f, \HHOsmt \hsigma \right\rangle_{\SobHD{\Domain} \times \SobH{\Domain}} = \aextHHO(\hu,\HHOsmt \hsigma).
\end{align*}
These identities reveal that the following is an equivalent reformulation of \eqref{algebraic-consistency-bform}:
\begin{equation*}
\hu \in \hZ
\quad \Longrightarrow \quad
\left( \;\forall \hsigma \in \HHOspace{}, \quad 
\HHObform(\Interp u_\Mesh - \hU, \hsigma) = 0 \;\right).
\end{equation*}
Thus, we infer that \eqref{algebraic-consistency-interp} $\Longleftrightarrow$ \eqref{algebraic-consistency-bform} owing to the nondegeneracy of $\HHObform$, whereas the equivalence \eqref{algebraic-consistecy-seminorm} $\Longleftrightarrow$ \eqref{algebraic-consistency-interp} is a consequence of Lemma~\ref{L:degeneracy}. Finally, the fact that \eqref{algebraic-consistecy-seminorm} is necessary for quasi-optimality in the semi-norm $\normHHO{\cdot}$ readily follows from \eqref{quasi-optimality} and the definition of $\hZ$.
\end{proof} 

% Enforcing consistency
Recall from \eqref{disc-prob-concrete} that the smoother $\hHHOsmt: \HHOspace{} \to \SpaceContHHO$ is obtained by means of the linear operator $\HHOsmt: \HHOspace{} \to \SobH{\Domain}$ which is used in the right-hand side of \eqref{HHO-method-modified}. Owing to the definition of $\aextHHO$, condition \eqref{algebraic-consistency-bform} can be further rewritten as follows:
\begin{equation*}
\forall \hu \in \hZ, \; \hsigma \in \HHOspace{}
\qquad
\int_\Domain \Grad u_\Mesh \cdot \GradM \PotRec \hsigma
= 
\int_\Domain \Grad u_\Mesh \cdot \Grad \HHOsmt \hsigma.
\end{equation*} 
Similar conditions can be found in \cite[Section~3.3]{Veeser.Zanotti:17p2} and \cite[Section~3.2]{Veeser.Zanotti:17p3} and are enforced there by means of moment-preserving smoothers, i.e., smoothers preserving certain moments on the simplices and on the interfaces of $\Mesh$. The integration by parts formula and the definition of the reconstruction allow us to apply the same technique also in this context. 

% Moment-preseving smoothers
In what follows, we adopt the convention $\mathbb{P}_{-1} = \{0\}$. 
\begin{lemma}[Consistency via moment-preserving smoothers]
\label{L:moment-preservation}
Let $\hsigma = (\sigma_\Mesh, \sigma_\Skeleton)$ be any pair in $\HHOspace{}$ and assume that the operator $\HHOsmt: \HHOspace{} \to \SobH{\Domain}$ is such that
\begin{equation}
\label{moment-preservation}
\int_K q (\HHOsmt \hsigma)
=
\int_K q \sigma_\Mesh 
\qquad \text{ and } \qquad
\int_F r (\HHOsmt \hsigma)
=
\int_F r \sigma_\Skeleton
\end{equation}
for all $K \in \Mesh$, $q \in \Poly{\Degree-1}{K}$ and for all $F \in \FacesMint$, $r \in \Poly{\Degree}{F}$. Let $\hHHOsmt$ be defined as in \eqref{disc-prob-concrete}. Then, we have
\begin{equation}
\label{algebraic-consistency-suffcond}
\aextHHO(\hs, \hsigma - \hHHOsmt \hsigma) = \Stab(\hs,\hsigma),
\end{equation}
for all $\hs \in \HHOspace{}$. Moreover, \eqref{algebraic-consistency} holds true.
\end{lemma}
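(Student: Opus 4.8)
The plan is to establish the identity \eqref{algebraic-consistency-suffcond} first, and then to deduce the consistency condition \eqref{algebraic-consistency} from it. For the identity, I would fix $\hs=(s_\Mesh,s_\Skeleton)$ and $\hsigma=(\sigma_\Mesh,\sigma_\Skeleton)$ in $\HHOspace{}$ and expand $\aextHHO(\hs,\hsigma-\hHHOsmt\hsigma)$ using the definition \eqref{aext-HHO}. Since $\hs\in\HHOspace{}$ has zero continuous component and $\hsigma-\hHHOsmt\hsigma$ has continuous component $\sigma_\Mesh-\HHOsmt\hsigma$ and "skeleton" component $\sigma_\Skeleton-(\HHOsmt\hsigma)_{|\Skeleton}$, this reads
\begin{equation*}
\aextHHO(\hs,\hsigma-\hHHOsmt\hsigma)
=\int_\Domain \GradM \PotRec\hs \cdot \Grad(\PotRec(\hsigma-\hHHOsmt\hsigma) - \sigma_\Mesh + \HHOsmt\hsigma)
+ \Stab(\hs, \hsigma - \hHHOsmt\hsigma),
\end{equation*}
where I have used that $\hsigma-\hHHOsmt\hsigma$ has the form $\hw + \htau$ with $\hw=(\sigma_\Mesh-\HHOsmt\hsigma, \cdot)\in \SpaceContHHO$ — wait, more carefully, $\hHHOsmt\hsigma\in\SpaceContHHO$ so $\hsigma-\hHHOsmt\hsigma = \hsigma + (-\hHHOsmt\hsigma)$, and $\aextHHO$ is evaluated by splitting into $\SpaceContHHO$ and $\HHOspace{}$ parts. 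The cleanest route is: $\aextHHO(\hs,\hsigma) = \HHObform(\hs,\hsigma)$ and $\aextHHO(\hs,\hHHOsmt\hsigma) = \int_\Domain \GradM\PotRec\hs\cdot\Grad\HHOsmt\hsigma$ (the stabilization term vanishes since the $\HHOspace{}$-part of $\hHHOsmt\hsigma$ is zero). Subtracting, and recalling $\HHObform(\hs,\hsigma)=\int_\Domain\GradM\PotRec\hs\cdot\GradM\PotRec\hsigma+\Stab(\hs,\hsigma)$, it remains to prove the key cancellation
\begin{equation*}
\int_\Domain \GradM\PotRec\hs\cdot\GradM\PotRec\hsigma
=\int_\Domain \GradM\PotRec\hs\cdot\Grad\HHOsmt\hsigma
\qquad\forall\,\hs\in\HHOspace{}.
\end{equation*}

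To prove this cancellation I would work element by element and integrate by parts on each $K$: since $(\PotRec\hs)_{|K}\in\Poly{\Degree+1}{K}$, write $\int_K\Grad(\PotRec\hs)\cdot\Grad\varphi = -\int_K \varphi\,\Lapl(\PotRec\hs) + \int_{\partial K}\varphi\,\Grad(\PotRec\hs)\cdot\Normal_K$ with $\varphi = \PotRec\hsigma - \HHOsmt\hsigma$. Here $\Lapl(\PotRec\hs)|_K \in \Poly{\Degree-1}{K}$, so the volume term is controlled by the first moment-preservation condition in \eqref{moment-preservation} provided I can replace $\PotRec\hsigma$ by $\sigma_\Mesh$ in that term — and indeed the defining relation \eqref{gradient-reconstruction-grad} applied with test function $q=\PotRec\hs|_K$ tells me exactly that $\int_K\Grad\PotRec\hs\cdot\Grad\PotRec\hsigma = -\int_K\sigma_\Mesh\,\Lapl\PotRec\hs + \int_{\partial K}\sigma_\Skeleton\,\Grad\PotRec\hs\cdot\Normal_K$. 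Using this representation on the left side and the integration-by-parts formula on the right side, the difference becomes $\sum_{K}\big(\int_K(\sigma_\Mesh-\HHOsmt\hsigma)\Lapl\PotRec\hs \;-\; \int_{\partial K}(\sigma_\Skeleton - \HHOsmt\hsigma)\Grad\PotRec\hs\cdot\Normal_K\big)$. The volume integrals vanish face-by-face by the first condition in \eqref{moment-preservation} since $\Lapl\PotRec\hs|_K\in\Poly{\Degree-1}{K}$, and the boundary integrals vanish by the second condition: on interior faces $\Grad\PotRec\hs\cdot\Normal_K|_F\in\Poly{\Degree}{F}$ so $\int_F(\sigma_\Skeleton-\HHOsmt\hsigma)\Grad\PotRec\hs\cdot\Normal_K = 0$, while on boundary faces $\sigma_\Skeleton = 0$ and $\HHOsmt\hsigma\in\SobH{\Domain}$ has vanishing trace, so those terms vanish too. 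Note the moment-preservation hypothesis is only imposed on interior faces, which is exactly what is needed because boundary faces are handled by the homogeneous boundary condition. This yields \eqref{algebraic-consistency-suffcond}.

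Finally, I deduce \eqref{algebraic-consistency}. Take $\hu\in\hZ$; by \eqref{kernel} we have $\normHHO{\hu-\Interp u_\Mesh}=0$, and $u_\Mesh\in\Poly{\Degree+1}{\Mesh}$, so by Lemma~\ref{L:degeneracy} (with $\hs=\Interp u_\Mesh$) and the identities \eqref{RI=E} we get $\aextHHO(\hu,\hsigma) = \aextHHO(\Interp u_\Mesh,\hsigma) = \HHObform(\Interp u_\Mesh,\hsigma)$ for all $\hsigma\in\HHOspace{}$ — this is the content of the first display in the proof of Lemma~\ref{L:algebraic-consistency}. Then for any $\hsigma\in\HHOspace{}$,
\begin{equation*}
\aextHHO(\hu,\hsigma-\hHHOsmt\hsigma)
=\aextHHO(\Interp u_\Mesh,\hsigma-\hHHOsmt\hsigma)
=\Stab(\Interp u_\Mesh,\hsigma)
=0,
\end{equation*}
using \eqref{algebraic-consistency-suffcond} with $\hs=\Interp u_\Mesh$ and then \eqref{intersection-properties} (recall $u_\Mesh\in\SobH{\Domain}$ so $\Stab(\Interp u_\Mesh,\cdot)=0$ follows from $\PotRecSt\Interp u_\Mesh = u_\Mesh$ being single-valued on $\Skeleton$ — more precisely one invokes \eqref{RI=E} and that $u_\Mesh$ has well-defined trace). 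This establishes \eqref{algebraic-consistency-bform}, hence by Lemma~\ref{L:algebraic-consistency} all of \eqref{algebraic-consistency} holds. The only genuinely delicate point is the element-wise integration-by-parts bookkeeping and, in particular, making sure the polynomial degrees line up with the moment-preservation conditions ($\Degree-1$ in the cells, $\Degree$ on the interfaces) and that the boundary faces are correctly dispatched to the homogeneous boundary condition rather than to a moment condition; the rest is a direct unwinding of definitions.
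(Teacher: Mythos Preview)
Your proof is correct and follows essentially the same approach as the paper's: both arguments reduce \eqref{algebraic-consistency-suffcond} to the cancellation $\int_\Domain \GradM\PotRec\hs\cdot\GradM\PotRec\hsigma = \int_\Domain \GradM\PotRec\hs\cdot\Grad\HHOsmt\hsigma$, then prove it by applying the defining relation \eqref{gradient-reconstruction-grad} with $q=(\PotRec\hs)_{|K}$ on one side, element-wise integration by parts on the other, and matching via the moment conditions (degree $\Degree-1$ in cells, degree $\Degree$ on interfaces); the consistency condition is then deduced by specializing to $\hs=\Interp u_\Mesh$ and using $\Stab(\Interp u_\Mesh,\cdot)=0$. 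Your explicit treatment of boundary faces (where $\sigma_\Skeleton=0$ and $\HHOsmt\hsigma$ has zero trace) makes precise a point the paper leaves implicit, and your citation of \eqref{intersection-properties} should strictly be replaced by the argument you sketch via \eqref{RI=E}, since $u_\Mesh$ lies in $\Poly{\Degree+1}{\Mesh}$ rather than $\Poly{\Degree}{\Mesh}$.
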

\begin{proof}
Let $\hsigma = (\sigma_\Mesh, \sigma_\Skeleton) \in \HHOspace{}$ be given. The definitions of $\PotRec$ and $\aextHHO$ in \eqref{gradient-reconstruction} and \eqref{aext-HHO}, respectively, yield
\begin{equation*}
\aextHHO(\hs,\hsigma) - \Stab(\hs,\hsigma)
=
%\int_\Domain \GradM \PotRec s \cdot \GradM \PotRec \sigma
%=
\sum \limits_{K \in \Mesh}
\left( -\int_K (\Lapl \PotRec \hs) \sigma_\Mesh
+ \sum_{F \in \Faces{K}} \int_{F} (\Grad \PotRec \hs \cdot \Normal_K) \sigma_\Skeleton \right) 
\end{equation*}
for all $\hs \in \HHOspace{}$. Indeed, the fact that $\PotRec \hs \in \Poly{\Degree+1}{\Mesh}$ ensures that $\PotRec \hs$ is an admissible test function in \eqref{gradient-reconstruction-grad}. Moreover, since $\PotRec \hs$ is element-wise smooth, we can exploit once more the definition of $\aextHHO$ and integrate by parts element-wise. We obtain 
\begin{equation*}
\aextHHO(\hs, \hHHOsmt \hsigma)
=
\sum \limits_{K \in \Mesh} 
\left( -\int_K (\Lapl \PotRec \hs ) \HHOsmt \hsigma
+ \sum_{F \in \Faces{K}}\int_{F} (\Grad \PotRec \hs \cdot \Normal_K) \HHOsmt \hsigma \right),
\end{equation*}
for all $\hs \in \HHOspace{}$. Comparing this identity with the previous one and invoking assumption \eqref{moment-preservation}, we infer that \eqref{algebraic-consistency-suffcond} holds true. 

Next, let $\hu=(u_\Mesh, u_\Skeleton) \in \hZ$. The combination of \eqref{RI=E} with Lemma~\ref{L:degeneracy} reveal that $\PotRec \Interp u_\Mesh = u_\Mesh$ as well as $\Stab(\Interp u_\Mesh, \cdot) = 0$. Setting $\hs = \Interp u_\Mesh$ in \eqref{algebraic-consistency-suffcond}, we infer that
\begin{equation*}
\aextHHO(\hu, \hHHOsmt \hsigma)
=
\aextHHO(\Interp u_\Mesh, \hHHOsmt \hsigma) 
=
\aextHHO(\Interp u_\Mesh, \hsigma)
=
\aextHHO(\hu, \hsigma),
\end{equation*} 
for all $\hsigma \in \HHOspace{}$, showing that \eqref{algebraic-consistency} holds true.
\end{proof}

% Intro to quasi-optimality
The importance of the identity \eqref{algebraic-consistency-suffcond} in Lemma~\ref{L:moment-preservation} goes beyond the fact that it is instrumental to check the validity of the consistency condition \eqref{algebraic-consistency}. Roughly speaking, it can be exploited also to bound the consistency error of \eqref{HHO-method-modified} in the so-called second Strang lemma \cite{Berger.Scott.Strang:72}. This is the key ingredient not only to prove the quasi-optimality of \eqref{HHO-method-modified} in the semi-norm $\normHHO{\cdot}$, but also to bound the corresponding quasi-optimality constant. 

% Quasi-optimality
\begin{proposition}[Quasi-optimality]
\label{P:quasi-optimality-HHO}
Assume that $\hu \in \SpaceContHHO$ solves the problem \eqref{cont-prob-abstract-HHO} and denote by $\hU \in \HHOspace{}$ the solution of \eqref{HHO-method-modified}. If the operator $\HHOsmt$ satisfies \eqref{moment-preservation}, then we have
\begin{equation}
\label{quasi-optimality-HHO}
\normHHO{\hu-\hU}
\leq
\sqrt{1 + C_{\HHO}^2}
\inf \limits_{\hs \in \HHOspace{}}
\normHHO{\hu-\hs},
\end{equation}
where $C_\HHO$ is the smallest constant such that
\begin{equation}
\label{consistency-constant}
\forall \hsigma \in \HHOspace{} \qquad
\normLeb{\GradM(\PotRec\hsigma - \HHOsmt \hsigma)}{\Domain} 
\leq C_\HHO \normHHO{\hsigma}.
\end{equation} 
\end{proposition}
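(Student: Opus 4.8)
The plan is to establish a sharp form of the second Strang lemma, using that $\aextHHO$ simultaneously extends $a_\HHO$ and $\HHObform$ and that the resulting consistency defect is controlled by Lemma~\ref{L:moment-preservation} and the constant $C_\HHO$ of \eqref{consistency-constant}. I would first fix an arbitrary $\hs \in \HHOspace{}$ and set $\hsigma := \hs - \hU \in \HHOspace{}$. Testing \eqref{cont-prob-abstract-HHO} with $\hw = \hHHOsmt\hsigma$, using the definition of $\ell_\HHO$ together with \eqref{HHO-method-modified}, and recalling that $\aextHHO$ restricts to $a_\HHO$ on $\SpaceContHHO$ and to $\HHObform$ on $\HHOspace{}$, I obtain the Galerkin-type relation
\[
\aextHHO(\hu - \hU, \hsigma) = \aextHHO(\hu, \hsigma - \hHHOsmt\hsigma), \qquad \hsigma \in \HHOspace{}.
\]
Expanding $\normHHO{\hu - \hU}^2$ along the splitting $\hu - \hU = (\hu - \hs) + \hsigma$, using bilinearity and symmetry of $\aextHHO$, the identity $\aextHHO(\hsigma,\hsigma) = \normHHO{\hsigma}^2$, and the Galerkin-type relation to rewrite $\aextHHO(\hu - \hs,\hsigma) = \aextHHO(\hu,\hsigma - \hHHOsmt\hsigma) - \normHHO{\hsigma}^2$, I arrive at the Strang-type identity
\[
\normHHO{\hu - \hU}^2 = \normHHO{\hu - \hs}^2 + 2\,\aextHHO(\hu, \hsigma - \hHHOsmt\hsigma) - \normHHO{\hsigma}^2.
\]

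The crucial step is to estimate the middle term by the best error times $\normHHO{\hsigma}$. For this I would exploit Lemma~\ref{L:moment-preservation}: the identity \eqref{algebraic-consistency-suffcond} gives $\aextHHO(\hs,\hsigma - \hHHOsmt\hsigma) = \Stab(\hs,\hsigma)$, while unfolding the definition \eqref{aext-HHO} of $\aextHHO$ on $\SpaceContHHO + \HHOspace{}$ yields both $\aextHHO(\hu,\hsigma - \hHHOsmt\hsigma) = \int_\Domain \Grad u_\Mesh \cdot \GradM(\PotRec\hsigma - \HHOsmt\hsigma)$ and $\aextHHO(\hs,\hsigma - \hHHOsmt\hsigma) = \int_\Domain \GradM\PotRec\hs \cdot \GradM(\PotRec\hsigma - \HHOsmt\hsigma) + \Stab(\hs,\hsigma)$; comparing the last two shows $\int_\Domain \GradM\PotRec\hs \cdot \GradM(\PotRec\hsigma - \HHOsmt\hsigma) = 0$, hence
\[
\aextHHO(\hu, \hsigma - \hHHOsmt\hsigma) = \int_\Domain \GradM(u_\Mesh - \PotRec\hs) \cdot \GradM(\PotRec\hsigma - \HHOsmt\hsigma).
\]
A Cauchy--Schwarz inequality, combined with $\normLeb{\GradM(u_\Mesh - \PotRec\hs)}{\Domain} \leq \normHHO{\hu - \hs}$ (which follows from \eqref{aext-HHO} and the positive semi-definiteness of $\Stab$) and the definition \eqref{consistency-constant} of $C_\HHO$, then bounds the middle term by $C_\HHO\,\normHHO{\hu - \hs}\,\normHHO{\hsigma}$.

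Inserting this bound into the Strang-type identity and viewing the right-hand side as a concave quadratic in $t := \normHHO{\hsigma} \geq 0$, its maximum at $t = C_\HHO\normHHO{\hu - \hs}$ gives $\normHHO{\hu - \hU}^2 \leq (1 + C_\HHO^2)\normHHO{\hu - \hs}^2$, and taking the infimum over $\hs \in \HHOspace{}$ produces \eqref{quasi-optimality-HHO}. (That $C_\HHO$ is finite is immediate, since $\HHOspace{}$ is finite-dimensional and $\normHHO{\cdot}$ is a genuine norm on it by Lemma~\ref{L:discrete-stability}.) I expect the main obstacle to be precisely the treatment of the defect term $\aextHHO(\hu, \hsigma - \hHHOsmt\hsigma)$: one must unfold the definition of $\aextHHO$ on the sum space with care and use Lemma~\ref{L:moment-preservation} so that the best-error quantity $\normHHO{\hu - \hs}$ appears, rather than the cruder $\normLeb{\Grad u_\Mesh}{\Domain} = \normHHO{\hu}$ -- this is exactly the point at which the reconstruction being one polynomial degree higher than the discrete unknowns could spoil quasi-optimality, and it is what Lemma~\ref{L:moment-preservation} is designed to neutralize.
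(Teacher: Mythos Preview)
Your proof is correct and reaches the sharp constant $\sqrt{1+C_\HHO^2}$, but it follows a different route than the paper. The paper introduces the $\aextHHO$-orthogonal projection $\Ritz:\SpaceContHHO\to\HHOspace{}$, uses the Pythagorean decomposition $\normHHO{\hu-\hU}^2=\normHHO{\hu-\Ritz\hu}^2+\normHHO{\hU-\Ritz\hu}^2$, and then bounds $\normHHO{\hU-\Ritz\hu}$ by duality; the definition of $\Ritz$ is what allows rewriting $\Stab(\Ritz\hu,\hsigma)$ as $\int_\Domain\GradM(u_\Mesh-\PotRec\Ritz\hu)\cdot\GradM\PotRec\hsigma$, leading to the same integral you obtain. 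Your argument bypasses the Ritz projection entirely: you fix an arbitrary $\hs\in\HHOspace{}$, derive the Strang-type identity, and extract directly from Lemma~\ref{L:moment-preservation} the orthogonality relation $\int_\Domain\GradM\PotRec\hs\cdot\GradM(\PotRec\hsigma-\HHOsmt\hsigma)=0$ valid for \emph{all} $\hs,\hsigma\in\HHOspace{}$; the quadratic maximization over $t=\normHHO{\hsigma}$ then recovers the constant. Your route is slightly more elementary (no auxiliary projection) and makes the key orthogonality more transparent; the paper's route is perhaps more structural, as the Pythagorean split immediately identifies the best approximation and isolates the defect as a norm rather than as a cross term to be optimized away.
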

\begin{proof}
We adapt the approach devised in \cite[section~3]{Veeser.Zanotti:17p3} to our context. Denote by $\Ritz: \SpaceContHHO \to \HHOspace{}$ the $\aextHHO$-orthogonal projection onto $\HHOspace{}$, i.e.,
\begin{equation}
\label{Ritz-projection}
\forall \hsigma \in \HHOspace{} \qquad
\aextHHO(\Ritz \hv, \hsigma) = \aextHHO(\hv,\hsigma)
\end{equation}
for all $\hv \in \SpaceContHHO$. Notice that this problem is uniquely solvable (because $\aextHHO$ restricted to $\HHOspace{}$ is positive-definite) and that $\Ritz \hv$ is the best approximation of $\hv$ in $\HHOspace{}$ with respect to the semi-norm $\normHHO{\cdot}$. The $\aextHHO$-orthogonality of $\Ritz$ implies that
\begin{equation}
\label{quasi-optimality-proof1}
\normHHO{\hu-\hU}^2 =
\normHHO{\hu-\Ritz \hu}^2 + \normHHO{\hU - \Ritz \hu}^2.
\end{equation}
Since $\aextHHO$ is a scalar product on $\HHOspace{}$, we have
\begin{equation}
\label{quasi-optimality-proof2}
\normHHO{\hU-\Ritz \hu}
=
\sup \limits_{\hsigma \in \HHOspace{}}
\dfrac{\aextHHO(\hU-\Ritz \hu, \hsigma)}{\normHHO{\hsigma}}.
\end{equation}
Let $\hsigma \in \HHOspace{}$ be arbitrary and recall that the restriction of $\aextHHO$ to $\HHOspace{}$ coincides with $\HHObform$. A comparison of problems \eqref{cont-prob-abstract} and \eqref{HHO-method-modified} reveals that
\begin{equation*}
\aextHHO(\hU-\Ritz \hu, \hsigma)  =
\aextHHO(\hu, \HHOsmt \hsigma) - \aextHHO(\Ritz \hu, \hsigma) =
\aextHHO(\hu-\Ritz \hu, \HHOsmt \hsigma) - \Stab(\Ritz \hu , \hsigma),
\end{equation*}
where the second identity follows from Lemma~\ref{L:moment-preservation}. Rearranging terms in \eqref{Ritz-projection} and recalling the expression of $\aextHHO$ in \eqref{aext-HHO}, we infer that
\begin{equation*}
\Stab(\Ritz \hu, \hsigma) =
\int_\Domain \GradM(u_\Mesh - \PotRec \Ritz \hu) \cdot \GradM \PotRec \hsigma
\end{equation*}
where $u_\Mesh$ is the cell component of $\hu$. If we insert this identity into the previous one, we infer that
\begin{equation*}
\aextHHO(\hU-\Ritz \hu, \hsigma)  =
\int_\Domain \GradM(u_\Mesh - \PotRec \Ritz \hu) \cdot \GradM( \HHOsmt\hsigma - \PotRec \hsigma).
\end{equation*}
Comparing with \eqref{quasi-optimality-proof2} and recalling the definition of $C_\HHO$ in \eqref{consistency-constant}, we finally obtain that
\begin{equation*}
\normHHO{\hU-\Ritz \hu} \leq 
C_\HHO \normHHO{\hu-\Ritz \hu}.
%\normOp{\id - \Smt}{\HHOspace{}}{\SpaceContHHO + \HHOspace{}}.
\end{equation*}
We conclude by inserting this inequality into \eqref{quasi-optimality-proof1}. 
\end{proof}

%---------------------------------------------------------------

\subsection{Moment-preserving smoothers}
\label{SS:smoother}

% Road map
Motivated by Proposition~\ref{P:quasi-optimality-HHO}, we now aim at constructing a concrete smoother which fulfills \eqref{moment-preservation} and such that the constant $C_\HHO$ in \eqref{consistency-constant} is $\leq C_{\Shape, \Degree}$. To make sure that our construction is of practical interest, we also require that the smoother is computationally feasible in the sense of Remark~\ref{R:feasible-smoothing}. As before, we denote by $\Dim \in \{2,3\}$ the space dimension and use the convention $\mathbb{P}_{-1} = \{0\}$. Our construction is inspired by the one in \cite[Section~3.3]{Veeser.Zanotti:17p2}. 

% Cell smoother
For all $K \in \Mesh$, we denote by $\Phi_K \in \SobH{\Domain}$ the \textit{element bubble} determined by the conditions $(i)$ $\Phi_K \equiv 0$ in $\overline{\Domain} \setminus K$, $(ii)$ $(\Phi_K)_{|K} \in \Poly{\Dim + 1}{K}$ and $(iii)$ $\Phi_K(m_K) = 1$ at the barycenter $m_K$ of $K$. We introduce a local linear operator $\BubbOper{K}: \Leb{\Domain} \to \Poly{\Degree-1}{K}$ by setting
\begin{equation}
	\label{smoother-cell}
	\forall q \in \Poly{\Degree-1}{K}
	\qquad
	\int_K q (\BubbOper{K} v_\Mesh) \Phi_K
	=
	\int_K q v_\Mesh,
\end{equation}      
for all $v_\Mesh\in \Leb{\Domain}$. Then, the global operator $\BubbOper{\Mesh}: \Leb{\Domain} \to \SobH{\Domain}$ is defined such that
\begin{equation}
	\label{smoother-mesh}
	\BubbOper{\Mesh} v_\Mesh
	:=
	\sum \limits_{K \in \Mesh} (\BubbOper{K} v_\Mesh) \Phi_K.
\end{equation}
Since $(\BubbOper{\Mesh} v_\Mesh)_{|K} = (\BubbOper{K} v_\Mesh)\Phi_K$, the operator $\BubbOper{\Mesh}$
preserves all the moments of $v_\Mesh$ of degree $\leq \Degree-1$ in each simplex of $\Mesh$, as a consequence of \eqref{smoother-cell}.

% Face smoother - 1
Next, let $F \in \FacesMint$ be an interface and let $K_1, K_2 \in \Mesh$ be such that $F = K_1 \cap K_2$. Setting $\omega_F := K_1 \cup K_2$, we denote by $\Phi_F \in \SobH{\Domain}$ the \textit{face bubble} determined by the conditions $(i)$ $\Phi_F \equiv 0$ in $\overline{\Domain} \setminus \omega_F$, $(ii)$ $(\Phi_{F})_{|K_j} \in \Poly{\Dim}{K_j}$ for $j=1,2$ and $(iii)$ $\Phi_F(m_F) = 1$ at the barycenter $m_F$ of $F$. We introduce a local linear operator $\BubbOper{F}: \Leb{\Skeleton} \to \Poly{\Degree}{F}$ setting
\begin{equation}
	\label{smoother-face}
	\forall r \in \Poly{\Degree}{F}
	\qquad
	\int_F r (\BubbOper{F} v_\Skeleton) \Phi_F
	=
	\int_F r v_\Skeleton
\end{equation}   
for all $v_\Skeleton\in \Leb{\Skeleton}$.
%
% Face smother - 2
For $\Degree = 0$, it is straightforward to extend $\BubbOper{F} v_\Skeleton$ from $\Poly{0}{F}$ to $\Sob{\Domain} \cap \Poly{0}{\Mesh}$. For $\Degree \geq 1$, let $\Lagr{\Degree}{\Mesh}$ collect the Lagrange nodes of degree $\Degree$ of $\Mesh$. For each $z \in \Lagr{\Degree}{\Mesh}$, let $\Phi_z$ be the Lagrange basis function of $\Sob{\Domain} \cap \Poly{\Degree}{\Mesh}$ associated with the evaluation at $z$, that is $\Phi_z (z') = \delta_{zz'}$ for all $z' \in \Lagr{\Degree}{\Mesh}$. Since $\Mesh$ is a matching simplicial mesh, the set $\{ (\Phi_z)_{|F} \mid z \in \Lagr{\Degree}{\Mesh} \cap F \}$ is the Lagrange basis of $\Poly{\Degree}{F}$. Therefore, we have $\BubbOper{F} v_\Skeleton = \sum_{z \in \Lagr{\Degree}{\Mesh} \cap F} (\BubbOper{F} v_\Skeleton)(z) \Phi_z$ in $F$. Motivated by this identity, we define the global operator $\BubbOper{\Skeleton}: \Leb{\Skeleton} \to \SobH{\Domain}$ such that
\begin{equation}
\label{smoother-skeleton}
\BubbOper{\Skeleton} v_\Skeleton
:=
\begin{cases}
\sum \limits_{F \in \FacesMint}
(\BubbOper{F} v_\Skeleton)\Phi_F, & \Degree =0,\\[3mm]
\sum \limits_{F \in \FacesMint}
\sum\limits_{z \in \Lagr{\Degree}{\Mesh} \cap F} (\BubbOper{F} v_\Skeleton)(z) \Phi_z \Phi_F, & \Degree \geq 1.
\end{cases}
\end{equation}
Since $(\BubbOper{\Skeleton} v_\Skeleton)_{|F} = (\BubbOper{F} v_\Skeleton)\Phi_F$ for all $F \in \FacesMint$ and all $\Degree\ge0$, the identity \eqref{smoother-face} implies that the operator $\BubbOper{\Skeleton}$ preserves all the moments of $v_\Skeleton$ of degree $\leq \Degree$ on each interface of $\Mesh$. % In particular, we see that $\BubbOper{\Skeleton}$ fulfills the second part of \eqref{moment-preservation}.

% Bubble smoother
A proper combination of $\BubbOper{\Mesh}$ and $\BubbOper{\Skeleton}$ provides an operator $\BubbOper{}$ which preserves all moments prescribed in \eqref{moment-preservation}.
\begin{proposition}[Bubble smoother]
\label{P:bubble-smoother} The operator $\BubbOper{}: \Leb{\Domain} \times \Leb{\Skeleton} \to \SobH{\Domain}$ defined for all $\hv=(v_\Mesh,v_\Skeleton) \in \Leb{\Domain} \times \Leb{\Skeleton}$ such that
\begin{equation}
\label{bubble-smoother}
\BubbOper{} \hv 
:=
\BubbOper{\Skeleton} v_\Skeleton 
+
\BubbOper{\Mesh} (v_\Mesh - \BubbOper{\Skeleton} v_\Skeleton)
\end{equation}
fulfills \eqref{moment-preservation} and satisfies, for all $K \in \Mesh$, the following estimate:
\begin{equation}
\label{bubble-smoother-stability}
\normLeb{\Grad \BubbOper{} \hv}{K}
\leq C_{\Shape,\Degree} \left( 
h_K^{-1} \normLeb{v_\Mesh}{K} +
\sum_{F \in \Faces{K}} h_F^{-\frac{1}{2}} \normLeb{v_\Skeleton}{F}\right).
\end{equation}
\end{proposition}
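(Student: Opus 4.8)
The plan is to verify the two claims separately: first that $\BubbOper{}$ reproduces the moments in \eqref{moment-preservation}, and then the local $H^1$-stability bound \eqref{bubble-smoother-stability}. For the moment-preservation, recall that $\BubbOper{\Skeleton}$ preserves all moments of $v_\Skeleton$ of degree $\le \Degree$ on each interface, so the first identity in \eqref{moment-preservation} (with $r \in \Poly{\Degree}{F}$) is immediate from the definition \eqref{bubble-smoother}, since $(\BubbOper{\Mesh}(\cdot))_{|F}$ contributes nothing to interface moments---indeed $\BubbOper{\Mesh}w$ vanishes on $\Skeleton$ because each $\Phi_K$ does. Wait: one must check that the correction term $\BubbOper{\Mesh}(v_\Mesh - \BubbOper{\Skeleton}v_\Skeleton)$ has zero trace on every $F \in \FacesMint$; this holds since $(\BubbOper{\Mesh}w)_{|K} = (\BubbOper{K}w)\Phi_K$ and $\Phi_K$ vanishes on $\partial K$. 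For the cell moments (with $q \in \Poly{\Degree-1}{K}$), write $\int_K q\,\BubbOper{}\hv = \int_K q\,\BubbOper{\Skeleton}v_\Skeleton + \int_K q\,\BubbOper{\Mesh}(v_\Mesh-\BubbOper{\Skeleton}v_\Skeleton)$, and since $\BubbOper{\Mesh}$ preserves moments of degree $\le \Degree-1$ on $K$ (from \eqref{smoother-cell}--\eqref{smoother-mesh}), the second term equals $\int_K q(v_\Mesh - \BubbOper{\Skeleton}v_\Skeleton)$; the two contributions of $\BubbOper{\Skeleton}v_\Skeleton$ cancel, leaving $\int_K q\,v_\Mesh$ as required.

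For the stability estimate, the strategy is to bound the $H^1(K)$-seminorms of the two building blocks $\BubbOper{\Mesh}w$ (with $w = v_\Mesh - \BubbOper{\Skeleton}v_\Skeleton$) and $\BubbOper{\Skeleton}v_\Skeleton$, then combine via the triangle inequality. For $\BubbOper{\Skeleton}v_\Skeleton$ on a cell $K$, one uses the definition \eqref{smoother-skeleton}: on $K$ this is a fixed polynomial (times the bubble $\Phi_F$ for the relevant face(s) $F \in \Faces{K} \cap \FacesMint$), so by an inverse inequality $\normLeb{\Grad(\BubbOper{F}v_\Skeleton \Phi_F)}{K} \Cleq h_K^{-1}\normLeb{\BubbOper{F}v_\Skeleton\Phi_F}{K}$, and then a scaling/equivalence-of-norms argument on the reference element---using that the bilinear form $(p,r)\mapsto \int_F p r \Phi_F$ is an inner product on $\Poly{\Degree}{F}$ with condition number depending only on $\Shape,\Degree$---gives $\normLeb{\BubbOper{F}v_\Skeleton\Phi_F}{K} \Cleq h_K^{1/2}\normLeb{v_\Skeleton}{F}$, hence $\normLeb{\Grad \BubbOper{\Skeleton}v_\Skeleton}{K} \Cleq h_F^{-1/2}\normLeb{v_\Skeleton}{F}$ (using $h_F \simeq h_K$ on a shape-regular simplex). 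An entirely analogous argument for $\BubbOper{\Mesh}w$ yields $\normLeb{\Grad \BubbOper{\Mesh}w}{K} \Cleq h_K^{-1}\normLeb{w}{K}$.

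The final step is to control $\normLeb{w}{K} = \normLeb{v_\Mesh - \BubbOper{\Skeleton}v_\Skeleton}{K}$ by $\normLeb{v_\Mesh}{K} + \normLeb{\BubbOper{\Skeleton}v_\Skeleton}{K}$, and the second term is bounded, by the same scaling estimate used above but in the $L^2(K)$-norm (without the inverse inequality), by $\Cleq \sum_{F \in \Faces{K}\cap\FacesMint} h_K^{1/2}\normLeb{v_\Skeleton}{F}$; since $h_K^{1/2} = h_K \cdot h_K^{-1/2} \simeq h_K \cdot h_F^{-1/2} \le \Diam(\Domain)\, h_F^{-1/2}$... more carefully, one keeps the factor $h_K^{-1}$ out front from the $\BubbOper{\Mesh}$ estimate, so $h_K^{-1}\normLeb{\BubbOper{\Skeleton}v_\Skeleton}{K} \Cleq h_K^{-1}h_K^{1/2}\sum_F\normLeb{v_\Skeleton}{F} = \sum_F h_K^{-1/2}\normLeb{v_\Skeleton}{F} \Cleq \sum_F h_F^{-1/2}\normLeb{v_\Skeleton}{F}$. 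Collecting all pieces gives \eqref{bubble-smoother-stability}. I expect the main obstacle to be the bookkeeping in the scaling arguments---tracking the powers of $h_K$ and $h_F$ and confirming that the norm-equivalence constants for the bubble-weighted inner products genuinely depend only on $\Shape$ and $\Degree$---rather than any conceptual difficulty; the moment-preservation part is essentially a direct computation once one notes that $\BubbOper{\Mesh}$ outputs functions vanishing on $\Skeleton$.
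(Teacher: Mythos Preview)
Your proposal is correct and follows essentially the same approach as the paper: the moment-preservation argument is identical (cancellation of the $\BubbOper{\Skeleton} v_\Skeleton$ contributions for cell moments, and vanishing of $\BubbOper{\Mesh}$ on the skeleton for face moments), and the stability bound proceeds via $L^2$-stability of the bubble operators combined with an inverse estimate. The only cosmetic difference is that the paper first establishes $\normLeb{\BubbOper{\Mesh} v_\Mesh}{K} \Cleq \normLeb{v_\Mesh}{K}$ and $\normLeb{\BubbOper{\Skeleton} v_\Skeleton}{K} \Cleq \sum_{F \in \Faces{K}} h_F^{1/2}\normLeb{v_\Skeleton}{F}$ and then applies a single inverse estimate to $\BubbOper{}\hv$ at the end, whereas you apply inverse estimates to each piece separately; the bookkeeping is equivalent.
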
  

\begin{proof}
Let $\hv = (v_\Mesh, v_\Skeleton) \in \Leb{\Domain} \times \Leb{\Skeleton}$. Owing to the definition of $\BubbOper{\Mesh}$ and \eqref{smoother-cell}, we have
\begin{equation}
\label{bubble-smoother-cell}
\int_K q (\BubbOper{} \hv) 
=
\int_K q (\BubbOper{\Skeleton} v_\Skeleton)
+
\int_K q (\BubbOper{K}( v_\Mesh - \BubbOper{\Skeleton} v_\Skeleton)) \Phi_K
=
\int_K q v_\Mesh,
\end{equation}
for all $K \in \Mesh$ and $q \in \Poly{\Degree-1}{K}$. Moreover, since $\BubbOper{\Mesh} (v_\Mesh - \BubbOper{\Skeleton} v_\Skeleton)$ vanishes on the skeleton of $\Mesh$, the definition of $\BubbOper{\Skeleton}$ and \eqref{smoother-face} reveal that
\begin{equation}
\label{bubble-smoother-faces}
\int_F r (\BubbOper{} \hv) 
=
\int_F r (\BubbOper{\Skeleton} v_\Skeleton)
=
\int_F r (B_F v_\Skeleton) \Phi_F
=
\int_F r v_\Skeleton,
\end{equation}
for all $F \in \FacesMint$ and $r \in \Poly{\Degree}{F}$. The above identities confirm that $\BubbOper{}$ fulfills \eqref{moment-preservation}. 

To verify the claimed $H^1$-norm estimate \eqref{bubble-smoother-stability}, fix $K \in \Mesh$ and $\hv = (v_\Mesh, v_\Skeleton)\in \Leb{\Domain} \times \Leb{\Skeleton}$. The definition of $\BubbOper{\Mesh}$ and $\Phi_K \leq 1$ yield
\begin{equation*}
\normLeb{\BubbOper{\Mesh} v_\Mesh}{K}^2
\leq
\int_K (\BubbOper{K} v_\Mesh) v_\Mesh
\leq
\normLeb{\BubbOper{K} v_\Mesh}{K} \normLeb{v_\Mesh}{K}.
\end{equation*}
Hence, we obtain $\normLeb{\BubbOper{\Mesh} v_\Mesh}{K} \Cleq \normLeb{v_\Mesh}{K}$ by a standard argument with bubble functions, see \cite{Verfurth:13}. Next, for $\Degree \geq 1$, the boundedness of the extension employed in \eqref{smoother-skeleton} and a scaling argument imply that
\begin{equation*}
\normLeb{\BubbOper{\Skeleton} v_\Skeleton}{K}^2
\Cleq
\sum_{F \in \Faces{K}}
\sum_{z \in \Lagr{p}{\Mesh} \cap F} 
\normSemi{\BubbOper{F} v_\Skeleton(z)\Phi_F(z) }^2 
\Cleq
\sum_{F \in \Faces{K}} h_F
\normLeb{(\BubbOper{F} v_\Skeleton)\Phi_F }{F}^2.
\end{equation*}
Apart of the intermediate step, the same estimate holds also for $\Degree = 0$. Then, for all $F \in \Faces{K}$, we argue as before, noticing $\Phi_F \leq 1$, to infer that
\begin{equation*}
\normLeb{(\BubbOper{F} v_\Skeleton)\Phi_F}{F}^2
\leq
\int_F (\BubbOper{F} v_\Skeleton)^2 \Phi_F
=
\int_F (\BubbOper{F} v_\Skeleton) v_\Skeleton
\leq
\normLeb{\BubbOper{F} v_\Skeleton}{F} \normLeb{v_\Skeleton}{F}.
\end{equation*}
This entails $\normLeb{\BubbOper{\Skeleton}v_\Skeleton}{K} \Cleq \sum_{F \in \Faces{K}}h_F^{1/2} \normLeb{v_\Sigma}{F}$, by a standard argument with bubble functions, see \cite{Verfurth:13}. We conclude combining this bound and the previous one with the definition of $\BubbOper{}$ in \eqref{bubble-smoother} and with the inverse estimate $\normLeb{\Grad \BubbOper{} \hv}{K} \Cleq h_K^{-1} \normLeb{\BubbOper{} \hv}{K}$. 
\end{proof}

% The bubble smoother cannot be used
The bubble smoother $\BubbOper{}$ maps into a space of bubble functions, thus generating spurious oscillations. This simple observation and inequality \eqref{bubble-smoother-stability} suggest that the $H^1$-norm of $\BubbOper{}\hsigma$ cannot be uniformly bounded by the $\normSemi{\cdot}_{\aextHHO}$-norm of $\hsigma$, irrespective of the size of $\Mesh$, for arbitrary $\hsigma \in \HHOspace{}$. This claim can be verified arguing as in \cite[Remark~3.5]{Veeser.Zanotti:17p2}. Therefore, the bubble smoother $\BubbOper{}$ should not be used into the HHO method \eqref{HHO-method-modified}, although it preserves all the moments prescribed in \eqref{moment-preservation}. In fact, as mentioned in Remark~\ref{R:smoothing}, the quasi-optimality constant of a quasi-optimal method is bounded from below in terms of the operator norm of the employed smoother.

% Stabilization of the bubble smoother
The inequality \eqref{bubble-smoother-stability} indicates that we may define a \textit{stabilized} version of $\BubbOper{}$ if we replace $\hv$ with $\hv - \hAvgOper{} \hv$ in \eqref{bubble-smoother}, provided $\hAvgOper{} \hv \in \SpaceContHHO$ is locally (at least) a first-order approximation of $\hv$. The operator $\hAvgOper{}$ can be defined, for instance, through some averaging technique, in the vein of \cite{Oswald:93,Karakashian.Pascal:03,Ern.Guermond:17}.    

% Simplified averaging
To make things precise, denote by $\LagrInt{\Degree+1}{\Mesh}$ the interior Lagrange nodes of degree $\Degree+1$ of $\Mesh$ (i.e. the Lagrange nodes not lying on $\partial \Domain$). For each node $z \in \LagrInt{\Degree+1}{\Mesh}$, let $\Phi_z$ be the Lagrange basis function of $\SobH{\Domain} \cap \Poly{\Degree+1}{\Mesh}$ associated with the evaluation at $z$. We define $\AvgOper{}: \HHOspace{} \to \SobH{\Domain}$ such that
\begin{equation}
	\label{averaging}
	\AvgOper{} \hsigma
	:=
	\sum_{z \in \LagrInt{\Degree+1}{\Mesh}}
	\left( \dfrac{1}{\#\omega_z}
	\sum_{K \in \omega_z} (\PotRec \hsigma)_{|K}(z) 
	\right) 
	\Phi_z, 
\end{equation}
for all $\hsigma = (\sigma_\Mesh, \sigma_\Skeleton) \in \HHOspace{}$, where $\omega_z$ collects the simplices of $\Mesh$ to which $z$ belongs and $\#\omega_z$ denotes the cardinality of $\omega_z$. The next proposition confirms that we can use this operator to stabilize the bubble smoother $\BubbOper{}$. We discuss possible variants of $\AvgOper{}$ in Remark~\ref{R:averaging-variants} below. Notice that $\AvgOper{}$ should not be directly used in \eqref{HHO-method-modified}, because it may not preserve the moments prescribed in \eqref{moment-preservation}.

% Stabilized bubble smoother
\begin{proposition}[Stabilized bubble smoother]
  \label{P:smoother-moment-preserving}
  Let $\BubbOper{}$ and $\AvgOper{}$ be defined as in \eqref{bubble-smoother} and \eqref{averaging}, respectively, and let $\hAvgOper{} : \HHOspace{} \to \SpaceContHHO$ be defined such that $\hAvgOper{} \hsigma:=(\AvgOper{} \hsigma,(\AvgOper{} \hsigma)_{|\Skeleton})$ for all $\hsigma = (\sigma_\Mesh, \sigma_\Skeleton) \in \HHOspace{}$. Then, the operator $\HHOsmt: \HHOspace{} \to \SobH{\Domain}$ such that
	\begin{equation}
		\label{smoother-moment-preserving}
		\HHOsmt \hsigma
		:=
		\AvgOper{} \hsigma + \BubbOper{}(\hsigma - \hAvgOper{} \hsigma)
	\end{equation}
	fulfills \eqref{moment-preservation} and is such that
	\begin{equation}
		\label{smoother-moment-preserving-stability}
		\normLeb{\GradM(\PotRec\hsigma - \HHOsmt \hsigma) }{\Domain}^2
		\leq C_{\Shape, \Degree}
		\sum_{K \in \Mesh} \sum_{F \in \Faces{K}}
		h_F^{-1} \normLeb{\sigma_\Skeleton - (\sigma_\Mesh)_{|K}}{F}^2.
	\end{equation}
\end{proposition}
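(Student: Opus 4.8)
The plan is to verify the moment-preservation property \eqref{moment-preservation} and the stability bound \eqref{smoother-moment-preserving-stability} separately; the former is immediate, the latter is the substantial part. For the moment preservation, observe that $\AvgOper{}\hsigma \in \SobH{\Domain} \cap \Poly{\Degree+1}{\Mesh}$ is continuous, hence single-valued on the faces of $\Mesh$, and split $\HHOsmt\hsigma = \AvgOper{}\hsigma + \BubbOper{}(\hsigma - \hAvgOper{}\hsigma)$ as in \eqref{smoother-moment-preserving}. Applying Proposition~\ref{P:bubble-smoother} to the pair $\hsigma - \hAvgOper{}\hsigma = (\sigma_\Mesh - \AvgOper{}\hsigma,\, \sigma_\Skeleton - (\AvgOper{}\hsigma)_{|\Skeleton})$ then gives $\int_K q\,\HHOsmt\hsigma = \int_K q\,\AvgOper{}\hsigma + \int_K q\,(\sigma_\Mesh - \AvgOper{}\hsigma) = \int_K q\,\sigma_\Mesh$ for every $q \in \Poly{\Degree-1}{K}$, and likewise $\int_F r\,\HHOsmt\hsigma = \int_F r\,\sigma_\Skeleton$ for every $F \in \FacesMint$ and $r \in \Poly{\Degree}{F}$.

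The heart of the stability estimate is the HHO-specific local bound
\begin{equation*}
\forall K \in \Mesh \qquad \normLeb{\Grad(\PotRec\hsigma - \sigma_\Mesh)}{K}^2 \Cleq \sum_{F \in \Faces{K}} h_F^{-1}\normLeb{\sigma_\Skeleton - (\sigma_\Mesh)_{|K}}{F}^2 .
\end{equation*}
I would prove it by testing \eqref{gradient-reconstruction-grad} with $q = (\PotRec\hsigma - \sigma_\Mesh)_{|K} \in \Poly{\Degree+1}{K}$ and integrating the term $\int_K \sigma_\Mesh\Lapl q$ by parts element-wise (legitimate since $\sigma_\Mesh$ is polynomial on $K$); this produces $\normLeb{\Grad(\PotRec\hsigma - \sigma_\Mesh)}{K}^2 = \sum_{F \in \Faces{K}} \int_F (\sigma_\Skeleton - (\sigma_\Mesh)_{|K})\,\Grad(\PotRec\hsigma - \sigma_\Mesh)\cdot\Normal_K$, after which the claim follows from the Cauchy--Schwarz inequality, the discrete trace inequality \eqref{discrete-trace-inequality}, and $h_F \le h_K$. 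Combined with the Poincaré--Steklov inequality \eqref{poincare-inequality} (applicable since $\int_K(\PotRec\hsigma - \sigma_\Mesh) = 0$ by \eqref{gradient-reconstruction-avg}) and a further use of \eqref{discrete-trace-inequality}, this local bound also controls $h_K^{-1}\normLeb{\PotRec\hsigma - \sigma_\Mesh}{K}$ and the face quantities $h_F^{-1/2}\normLeb{(\PotRec\hsigma - \sigma_\Mesh)_{|K}}{F}$, $F \in \Faces{K}$, by the same right-hand side.

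With these tools in hand, I would split $\normLeb{\GradM(\PotRec\hsigma - \HHOsmt\hsigma)}{\Domain} \le \normLeb{\GradM(\PotRec\hsigma - \AvgOper{}\hsigma)}{\Domain} + \normLeb{\Grad\BubbOper{}(\hsigma - \hAvgOper{}\hsigma)}{\Domain}$ and bound the two summands. For the averaging term, the standard local stability and approximation properties of the operator \eqref{averaging} (in the spirit of \cite{Oswald:93,Karakashian.Pascal:03,Ern.Guermond:17}) bound, on each $K$, both $\normLeb{\Grad(\PotRec\hsigma - \AvgOper{}\hsigma)}{K}$ and $h_K^{-1}\normLeb{\PotRec\hsigma - \AvgOper{}\hsigma}{K}$ by the square root of a sum of $h_F^{-1}\normLeb{\Jump{\PotRec\hsigma}}{F}^2$ over the interfaces in the patch of cells around $K$ plus $h_F^{-1}\normLeb{(\PotRec\hsigma)_{|F}}{F}^2$ over the boundary faces in that patch (the latter because $\AvgOper{}$ uses only interior Lagrange nodes). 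Writing, for an interface $F = K_1 \cap K_2$, the jump $\Jump{\PotRec\hsigma}_{|F}$ as a combination of $(\PotRec\hsigma - \sigma_\Mesh)_{|K_j}$ and $(\sigma_\Mesh)_{|K_j} - \sigma_\Skeleton$, $j=1,2$, and, for a boundary face $F \subseteq K$, $(\PotRec\hsigma)_{|F} = (\PotRec\hsigma - \sigma_\Mesh)_{|K} + ((\sigma_\Mesh)_{|K} - \sigma_\Skeleton)$ on $F$ (using that $\sigma_\Skeleton = 0$ on $\FacesMbnd$), and inserting the face estimates from the previous paragraph, one obtains $\sum_{F \in \FacesMint} h_F^{-1}\normLeb{\Jump{\PotRec\hsigma}}{F}^2 + \sum_{F \in \FacesMbnd} h_F^{-1}\normLeb{(\PotRec\hsigma)_{|F}}{F}^2 \Cleq \sum_{K \in \Mesh}\sum_{F \in \Faces{K}} h_F^{-1}\normLeb{\sigma_\Skeleton - (\sigma_\Mesh)_{|K}}{F}^2$. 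For the bubble term, I would invoke \eqref{bubble-smoother-stability} applied to $\hv = \hsigma - \hAvgOper{}\hsigma$, then decompose $\sigma_\Mesh - \AvgOper{}\hsigma = (\sigma_\Mesh - \PotRec\hsigma) + (\PotRec\hsigma - \AvgOper{}\hsigma)$ in the cell term and, on each face $F \in \Faces{K}$, $\sigma_\Skeleton - (\AvgOper{}\hsigma)_{|F} = (\sigma_\Skeleton - (\sigma_\Mesh)_{|K}) + (\sigma_\Mesh - \PotRec\hsigma)_{|K} + (\PotRec\hsigma - \AvgOper{}\hsigma)_{|K}$, controlling each piece through the local bound above, Poincaré--Steklov, and the discrete trace inequality. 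Summing all the resulting local estimates over $K \in \Mesh$ and using that the mesh patches have finite overlap depending only on $\Shape$ yields \eqref{smoother-moment-preserving-stability}.

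I expect the main obstacle to be the HHO-specific local estimate on $\normLeb{\Grad(\PotRec\hsigma - \sigma_\Mesh)}{K}$: choosing the right test function in \eqref{gradient-reconstruction-grad} and carrying out the integration-by-parts bookkeeping is precisely what makes the degree-$(\Degree+1)$ reconstruction interact cleanly with the coercivity semi-norm of the degree-$\Degree$ unknowns appearing on the right-hand side of \eqref{smoother-moment-preserving-stability}. The remaining work is routine but somewhat tedious: the main point is to keep track of the neighbouring-cell contributions introduced by the averaging operator so that, after summation, they collapse back to the target right-hand side.
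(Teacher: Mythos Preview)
Your proposal is correct and follows essentially the same route as the paper: moment preservation via Proposition~\ref{P:bubble-smoother}, the triangle-inequality split into the averaging and bubble pieces, the bound on the averaging piece through jumps of $\PotRec\hsigma$ (citing \cite{Ern.Guermond:17}), the bound on the bubble piece via \eqref{bubble-smoother-stability}, and finally the HHO-specific estimate $\normLeb{\Grad(\PotRec\hsigma-\sigma_\Mesh)}{K}\Cleq\sum_{F\in\Faces{K}}h_F^{-1/2}\normLeb{\sigma_\Skeleton-(\sigma_\Mesh)_{|K}}{F}$ from \eqref{gradient-reconstruction-grad}. The only cosmetic difference is that the paper merges the two pieces early via an inverse estimate before invoking the jump bound, whereas you keep them separate throughout; both organizations lead to the same chain of inequalities.
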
 

\begin{proof}
	According to \eqref{bubble-smoother-cell}, we have
	\begin{equation*}
		\int_K q\HHOsmt \hsigma
		=
		\int_K q(\AvgOper{} \hsigma - \BubbOper{} \hAvgOper{} \hsigma)
		+
		\int_K q \BubbOper{}\hsigma
		=
		\int_K q \sigma_\Mesh,
	\end{equation*}
	for all $K \in \Mesh$, $q \in \Poly{\Degree-1}{K}$ and $\hsigma \in \HHOspace{}$. The fact that $\HHOsmt$ preserves all the moments of degree $\leq \Degree$ on the interfaces of $\Mesh$ can be verified similarly, with the help of \eqref{bubble-smoother-faces}. This confirms that $\HHOsmt$ fulfills \eqref{moment-preservation}.
	
	Concerning the claimed stability, we first derive a local version of \eqref{smoother-moment-preserving-stability}. To this end, let $K \in \Mesh$ and $\hsigma \in \HHOspace{}$ be given. The triangle inequality readily implies that
	\begin{equation*}
	\normLeb{\Grad (\PotRec\hsigma - \HHOsmt\hsigma)}{K}
	\leq
	\normLeb{\Grad(\PotRec\hsigma - \AvgOper{} \hsigma)}{K}
	+
	\normLeb{\Grad \BubbOper{} (\hsigma - \hAvgOper{} \hsigma)}{K}.
	\end{equation*}
	We estimate the second summand in the right-hand side with the help of Proposition~\ref{P:bubble-smoother}, the discrete trace inequality \eqref{discrete-trace-inequality}, identity \eqref{gradient-reconstruction-avg} and the Poincar\'e-Steklov inequality \eqref{poincare-inequality}:
	\begin{equation*}
	\begin{split}
	&\normLeb{\Grad \BubbOper{} (\hsigma - \hAvgOper{} \hsigma)}{K} 
	\Cleq
	h_K^{-1} \normLeb{\sigma_\Mesh - \AvgOper{} \hsigma}{K}
	+
	\sum_{F \in \Faces{K}} h_F^{-\frac{1}{2}}
	\normLeb{\sigma_\Skeleton - \AvgOper{} \hsigma}{F}\\
	& \Cleq
	h_K^{-1} \normLeb{\PotRec\hsigma - \AvgOper{} \hsigma}{K}
	+
	\normLeb{\Grad(\sigma_\Mesh - \PotRec \hsigma)}{K}
	+
	\sum_{F \in \Faces{K}} h_F^{-\frac{1}{2}}
	\normLeb{\sigma_\Skeleton - (\sigma_\Mesh)_{|K}}{F}.
	\end{split}
	\end{equation*}
	We insert this bound into the previous one. An inverse estimate yields
	\begin{equation}
	\label{smoother-moment-preserving-proof1}
	\begin{split}
	\normLeb{\Grad (\PotRec\hsigma - \HHOsmt\hsigma)}{K}
	&\Cleq
	h_K^{-1} \normLeb{\PotRec\hsigma - \AvgOper{} \hsigma}{K}
	+
	\normLeb{\Grad(\sigma_\Mesh - \PotRec \hsigma)}{K}\\
	&\quad +
	\sum_{F \in \Faces{K}} h_F^{-\frac{1}{2}}
	\normLeb{\sigma_\Skeleton - (\sigma_\Mesh)_{|K}}{F}.
	\end{split}
	\end{equation} 
 	We estimate the first summand in the right-hand side by means of \cite[Lemma~4.3]{Ern.Guermond:17}. Invoking also \eqref{gradient-reconstruction-avg}, \eqref{discrete-trace-inequality} and \eqref{poincare-inequality}, we derive
	\begin{equation*}
	\label{smoother-moment-preserving-proof2}
	\begin{split}
	&h_K^{-1}\normLeb{\PotRec\hsigma - \AvgOper{} \hsigma}{K}
	\Cleq
	\sum_{F \cap K \neq \emptyset}
	h_F^{-\frac{1}{2}} \normLeb{\Jump{\PotRec\hsigma}}{F}\\
	&\;\; \Cleq
	\sum_{K' \cap K \neq \emptyset} \left(
	\normLeb{\Grad (\sigma_\Mesh - \PotRec \hsigma)}{K'}
	+
	\sum_{F' \in \Faces{K'}}   
	h_{F'}^{-\frac{1}{2}} \normLeb{\sigma_\Skeleton - (\sigma_\Mesh)_{|K'}}{F'}  \right)
	\end{split}
	\end{equation*}
	where $F$ and $K'$ vary in $\Faces{}$ and $\Mesh$, respectively, and $\Jump{\cdot}$ is the jump operator. Moreover, for all $K' \in \Mesh$, the identity \eqref{gradient-reconstruction-grad} and \eqref{discrete-trace-inequality} reveal
	\begin{equation*}
	\label{smoother-moment-preserving-proof3}
	\normLeb{\Grad(\sigma_\Mesh - \PotRec \hsigma)}{K'}
	\Cleq
	\sum_{F' \in \Faces{K'}} h_{F'}^{-\frac{1}{2}} 
	\normLeb{\sigma_\Skeleton - (\sigma_\Mesh)_{|K'}}{F'}.
	\end{equation*}
	We insert this bound and the previous one into \eqref{smoother-moment-preserving-proof1}. Squaring and summing over all $K \in \Mesh$, we infer that
	\begin{equation*}
	\normLeb{\GradM (\PotRec\hsigma - \HHOsmt\hsigma)}{\Domain}^2
	\Cleq
	\sum_{K \in \Mesh} \sum_{K' \cap K \neq \emptyset}
	\sum_{F' \in \Faces{K'}}
	h_{F'}^{-1} \normLeb{\sigma_\Skeleton - (\sigma_\Mesh)_{|K'}}{F'}^2,
	\end{equation*}
	where $K'$ varies in $\Mesh$. We conclude recalling that the number of simplices touching a given simplex is $\leq C_\Shape$.
\end{proof}

% Variants of the averaging 
\begin{remark}[Variants of $\AvgOper{}$]
\label{R:averaging-variants}
Instead of taking the average of $\PotRec\hsigma$ at each node $z \in \LagrInt{\Degree+1}{\Mesh}$, it is possible to fix $K_z \in \Mesh$ with $z \in K_z$ and set 
\begin{equation}
\label{averaging-simplified}
\AvgOper{}' \hsigma := \sum_{z \in \LagrInt{\Degree+1}{\Mesh}} (\PotRec\hsigma)_{|K_z} \Phi_z
\end{equation}
in the vein of the Scott--Zhang interpolation \cite{Scott.Zhang:90}. This modification preserves the main properties of $\AvgOper{}$, whereas the operations needed compute $\AvgOper{}'$ are significantly reduced, see \cite[Lemma~3.3]{Veeser.Zanotti:17p3}. One may also replace the reconstruction $\PotRec\hsigma$ by the cell component $\sigma_\Mesh$ of $\hsigma$, both in \eqref{averaging} and \eqref{averaging-simplified}. Hence, for $\Degree \geq 1$, the sum can be restricted to the interior Lagrange nodes of degree $\Degree$ (and not $\Degree+1$). With this variant of $\AvgOper{}$ and $\AvgOper{}'$, the statement of Proposition~\ref{P:smoother-moment-preserving} remains unchanged. Yet, the proof of Lemma~\ref{L:supercloseness} below and the subsequent derivation of an $L^2$-norm error estimate appear to be problematic for $\Degree = 0$. 
\end{remark}  

% Feasibility of the smoother
\begin{remark}[Feasibility of $\HHOsmt$]
\label{R:computational-aspects}
Let $\HHOsmt$ be as in Proposition~\ref{P:smoother-moment-preserving}. A computationally convenient basis of $\HHOspace{}$ consists of functions $\hsigma_1,\dots,\hsigma_N$ that are supported either in one simplex or on one interface of $\Mesh$. The local estimates established in the proof of Proposition~\ref{P:smoother-moment-preserving} reveal that the support of $\HHOsmt\hsigma_i$, $i=1,\dots,N$, is a subset of $\bigcup \{K \in \Mesh \mid K \cap \mathrm{supp}(\hsigma_i) \neq \emptyset \}$. Hence, the number of simplices in the support of $\HHOsmt\hsigma_i$ is $\leq C_{\Shape}$. Moreover, the construction of $\HHOsmt \hsigma_i$ from $\hsigma_i$ requires at most $\mathrm{O}(1)$ operations. Therefore,  we can evaluate the duality $\left\langle f, \HHOsmt \hsigma_i \right\rangle_{\SobHD{\Domain} \times \SobH{\Domain}}$ with $\mathrm{O}(1)$ operations and the cost for solving \eqref{HHO-method-modified} is at most a constant factor times the cost for solving \eqref{HHO-method-standard}.
\end{remark}

%---------------------------------------------------------------

\subsection{Error estimates}
\label{SS:error-estimates}

% H1 estimate
We now consider the HHO method \eqref{HHO-method-modified} with the smoother $\HHOsmt$ proposed in \eqref{smoother-moment-preserving} and derive broken $H^1$- and $L^2$-norm error estimates. The former readily follows from the abstract quasi-optimality stated in Proposition~\ref{P:quasi-optimality-HHO}, combined with the approximation properties of the HHO space and  Proposition~\ref{P:smoother-moment-preserving}.

\begin{theorem}[Broken $H^1$-norm error estimate]
	\label{T:H1-estimate}
	Let $u \in \SobH{\Domain}$ solve \eqref{Poisson} and denote by $\hU \in \HHOspace{}$ the solution of \eqref{HHO-method-modified} with $\HHOsmt$ as in Proposition~\ref{P:smoother-moment-preserving}. Then, the following holds true:
	\begin{equation}
		\label{H1-estimate-nonasymptotic}
		\normLeb{\GradM(u-\PotRec \hU)}{\Domain}^2
		+
		\Stab(\hU, \hU)
		\leq C_{\Shape,\Degree}
		\sum_{K \in \Mesh} \inf_{q \in \Poly{\Degree+1}{K}}
		\normLeb{\Grad(u-q)}{K}^2.
	\end{equation}
	Furthermore, if $u \in H^m(\Domain)$ with $m \in \{ 1,\dots, \Degree+2 \}$, we have
	\begin{equation}
		\label{H1-estimate-asymptotic}
		\normLeb{\GradM(u-\PotRec \hU)}{\Domain}^2
		+
		\Stab(\hU, \hU)
		\leq C_{\Shape,\Degree}
		\sum_{K \in \Mesh} h_K^{2(m-1)}
		\normSemi{u}_{H^{m}(K)}^2.
	\end{equation}
\end{theorem}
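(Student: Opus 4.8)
The plan is to deduce the broken $H^1$-norm estimate directly from the abstract quasi-optimality of Proposition~\ref{P:quasi-optimality-HHO}, once we verify its hypotheses for the smoother $\HHOsmt$ of Proposition~\ref{P:smoother-moment-preserving}. First I would observe that $\HHOsmt$ satisfies the moment-preservation conditions \eqref{moment-preservation} by Proposition~\ref{P:smoother-moment-preserving}, so Proposition~\ref{P:quasi-optimality-HHO} applies and gives $\normHHO{\hu-\hU} \leq \sqrt{1+C_\HHO^2}\,\inf_{\hs \in \HHOspace{}} \normHHO{\hu-\hs}$, where $\hu=(u,u_{|\Skeleton})$. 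The stability estimate \eqref{smoother-moment-preserving-stability} of Proposition~\ref{P:smoother-moment-preserving} bounds $\normLeb{\GradM(\PotRec\hsigma-\HHOsmt\hsigma)}{\Domain}^2$ by $C_{\Shape,\Degree}\sum_{K}\sum_{F\in\Faces{K}} h_F^{-1}\normLeb{\sigma_\Skeleton-(\sigma_\Mesh)_{|K}}{F}^2$, and the latter is controlled by $\normHHO{\hsigma}^2$: indeed the face-difference term is part of the coercivity norm \eqref{norm-coercivity}, which is equivalent to $\HHObform(\hsigma,\hsigma)=\normHHO{\hsigma}^2$ via Lemma~\ref{L:discrete-stability}. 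This shows $C_\HHO \leq C_{\Shape,\Degree}$, so the quasi-optimality constant $\sqrt{1+C_\HHO^2}$ is bounded by a function of the shape parameter and the polynomial degree alone.

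Next I would identify the left-hand side of \eqref{H1-estimate-nonasymptotic} with (a multiple of) $\normHHO{\hu-\hU}^2$. Writing $\hu-\hU = (u-U_\Mesh,\ldots)$ as an element of $\SpaceContHHO+\HHOspace{}$ and using the formula \eqref{aext-HHO} for $\aextHHO$ together with $\PotRec$ applied to the conforming part being the identity (since $u\in\SobH{\Domain}$ contributes $\GradM u$, while $\PotRec$ acts only on the discrete part $-\hU$), one gets
\begin{equation*}
\normHHO{\hu-\hU}^2 = \normLeb{\GradM(u-\PotRec\hU)}{\Domain}^2 + \Stab(\hU,\hU).
\end{equation*}
On the other side, the infimum $\inf_{\hs\in\HHOspace{}}\normHHO{\hu-\hs}$ is bounded above by the choice $\hs=\Interp u$, giving $\normHHO{\hu-\Interp u}^2 = \normLeb{\GradM(u-\PotRec\Interp u)}{\Domain}^2 + \Stab(\Interp u,\Interp u)$, which is exactly the left-hand side of the interpolation estimate \eqref{interpolation-errors-H1-norm} in Lemma~\ref{L:interpolation-errors}. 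Combining this with Lemma~\ref{L:interpolation-errors} yields the local best-error bound \eqref{H1-estimate-nonasymptotic}.

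For the asymptotic rate \eqref{H1-estimate-asymptotic} I would simply apply, on each simplex $K$, a standard polynomial approximation (Bramble--Hilbert / Deny--Lions) estimate: for $u\in H^m(K)$ with $1\le m\le\Degree+2$, $\inf_{q\in\Poly{\Degree+1}{K}}\normLeb{\Grad(u-q)}{K}^2 \le C_{\Shape,\Degree}\,h_K^{2(m-1)}\normSemi{u}_{H^m(K)}^2$, and sum over $K\in\Mesh$. I expect the only genuinely delicate point to be the bookkeeping that identifies $\normHHO{\hu-\hU}^2$ with the broken-norm-plus-stabilization quantity on the left of \eqref{H1-estimate-nonasymptotic} — one must be careful that the conforming component $u$ enters $\aextHHO$ through its true gradient $\GradM u$ and not through any reconstruction, whereas the discrete component $-\hU$ enters through $\PotRec\hU$ and $\Stab(\hU,\hU)$; this is precisely the content of the extension formula \eqref{aext-HHO} and is routine but must be stated correctly. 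Everything else is a direct chaining of the already-established Propositions and Lemmas.
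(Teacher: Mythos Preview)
Your proposal is correct and follows essentially the same route as the paper: combine Proposition~\ref{P:quasi-optimality-HHO} with the moment-preservation and stability of Proposition~\ref{P:smoother-moment-preserving}, bound $C_\HHO$ via Lemma~\ref{L:discrete-stability}, choose $\hs=\Interp u$ and invoke Lemma~\ref{L:interpolation-errors}, then finish with Bramble--Hilbert. The only cosmetic difference is that you spell out the identification $\normHHO{\hu-\hU}^2 = \normLeb{\GradM(u-\PotRec\hU)}{\Domain}^2 + \Stab(\hU,\hU)$ explicitly, whereas the paper leaves it implicit in ``recalling the definition of the semi-norm.''
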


\begin{proof}
The combination of Propositions~\ref{P:quasi-optimality-HHO} and \ref{P:smoother-moment-preserving} ensures that the HHO method \eqref{HHO-method-modified} with $\HHOsmt$ as in \eqref{smoother-moment-preserving} is quasi-optimal in the semi-norm $\normHHO{\cdot}$. Recalling the definition of the semi-norm $\normHHO{\cdot}$, the quasi-optimal estimate \eqref{quasi-optimality-HHO} takes the form
\begin{equation*}
\normLeb{\GradM(u-\PotRec \hU)}{\Domain}^2
+
\Stab(\hU, \hU)
\leq (1 + C_{\HHO}^2)
\inf_{\hs \in \HHOspace{}}
\left( \normLeb{\GradM(u-\PotRec \hs)}{\Domain}^2
+ \Stab(\hs,\hs)\right).  
\end{equation*}
Lemma~\ref{L:discrete-stability} and Proposition~\ref{P:quasi-optimality-HHO} provide also an upper bound on $C_\HHO$. In fact, for all $\hsigma = (\sigma_\Mesh, \sigma_\Skeleton) \in \HHOspace{}$, we have
\begin{equation*}
\normLeb{\GradM(\PotRec \hsigma - \HHOsmt \hsigma)}{\Domain}
\Cleq
\norm{\hsigma}_{\HHOspace{}}
\Cleq \normHHO{\hsigma},
\end{equation*}
showing that $C_\HHO \leq C_{\Shape, \Degree}$. Thus, we infer that	
\begin{equation*}
	\normLeb{\GradM(u-\PotRec \hU)}{\Domain}^2
	+
	\Stab(\hU, \hU)
	\Cleq
	\inf_{\hs \in \HHOspace{}}
	\left( \normLeb{\GradM(u-\PotRec \hs)}{\Domain}^2
	+ \Stab(\hs,\hs)\right).  
\end{equation*}
We can now derive the first claimed estimate by taking $\hs=\Interp u$ and using inequality \eqref{interpolation-errors-H1-norm} in Lemma~\ref{L:interpolation-errors}. The second estimate easily follows from the first one using standard polynomial approximation properties in Sobolev spaces.
\end{proof}

% Comments on the H1 estimate
According to Theorem~\ref{T:H1-estimate}, the HHO method \eqref{HHO-method-modified} with the smoother $\HHOsmt$ proposed in \eqref{smoother-moment-preserving} reproduces the approximation properties of the interpolant $\Interp$ (see \eqref{HHO-interpolant}) in the semi-norm $\normHHO{\cdot}$. In fact, similarly to the first estimate of Lemma~\ref{L:interpolation-errors}, the right-hand side of \eqref{H1-estimate-nonasymptotic} bounds the left-hand side also from below. Note also that only the minimal regularity $u \in \SobH{\Domain}$ is involved there and that \eqref{H1-estimate-asymptotic} exploits only element-wise regularity of $u$. 

% Intro to the L2 estimate
Next, we recall from \cite[Theorem~10]{DiPietro.Ern.Lemaire:14} that an $L^2$-norm error estimate of the HHO method \eqref{HHO-method-standard} can be derived via the Aubin--Nitsche duality technique. We aim at establishing a counterpart of such a result in the present setting. This would confirm, in particular, that the use of a smoother does not generally rule out the possibility of establishing $L^2$-norm error estimates by duality.

% Duality
As before, we denote by $u \in \SobH{\Domain}$ and $\hU = (U_\Mesh, U_\Sigma) \in \HHOspace{}$ the solutions of problems \eqref{Poisson} and \eqref{HHO-method-modified}, respectively, with $\HHOsmt$ as in \eqref{smoother-moment-preserving}. Proceeding as in \cite{DiPietro.Ern.Lemaire:14}, we let $\psi \in \SobH{\Domain}$ be the weak solution of 
\begin{equation}
	\label{duality}
	-\Lapl \psi = U_\Mesh - \LebPro_\Mesh u
	\;\; \text{ in } \Domain
	\qquad \text{ and }\qquad
	\psi = 0 
	\;\; \text{ on } \partial \Domain.
\end{equation}
By elliptic regularity \cite{Grisvard:11}, there are $\alpha \in (\frac12, 1]$ and a constant $c \geq 0$ such that $\psi \in H^{1+\alpha}(\Domain)$ with
\begin{equation}
	\label{elliptic-regularity}
	\norm{\psi}_{H^{1+\alpha}(\Domain)}
	\leq
	c \normLeb{U_\Mesh - \LebPro_\Mesh u}{\Domain}.
\end{equation} 

%
% Derivation of error identity
As a preliminary step, we derive a supercloseness estimate on the $L^2$-norm of $U_\Mesh - \LebPro_\Mesh u$. Unlike \cite{DiPietro.Ern.Lemaire:14}, we do not need to address the lowest-order case $\Degree = 0$ separately.

\begin{lemma}[Supercloseness $L^2$-estimate]
	\label{L:supercloseness}
	Let $u \in \SobH{\Domain}$ solve \eqref{Poisson} and denote by $\hU \in \HHOspace{}$ the solution of \eqref{HHO-method-modified} with $\HHOsmt$ as in \eqref{smoother-moment-preserving}. Let $\alpha \in (\frac12, 1]$ be such that \eqref{elliptic-regularity} is satisfied. Then, the following holds true with $h := \max_{K \in \Mesh} h_K$:
	\begin{equation}
		\label{L2-supercloseness}
		\normLeb{U_\Mesh - \LebPro_\Mesh u}{\Domain}^2
		\leq C_{\Shape, \Degree}
		h^{2\alpha} \sum_{K\in \Mesh} 
		\inf_{q \in \Poly{\Degree+1}{K}} \normLeb{\Grad(u-q)}{K}^2.
	\end{equation} 
\end{lemma}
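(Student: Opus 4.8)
The strategy is the standard Aubin–Nitsche duality argument, adapted to the hybrid setting with the smoother $\HHOsmt$. I would start from the $L^2$-norm of $w := U_\Mesh - \LebPro_\Mesh u$ and test the dual problem \eqref{duality} against $w$. Writing $\hpsi := (\psi, \psi_{|\Skeleton}) \in \SpaceContHHO$ and using that $\LebPro_\Mesh$ is the $L^2$-orthogonal projection, I would obtain
\[
\normLeb{w}{\Domain}^2 = \int_\Domain \Grad \psi \cdot \Grad w = \int_\Domain \Grad \psi \cdot \GradM(U_\Mesh - u),
\]
and then rewrite $\int_\Domain \Grad\psi \cdot \GradM U_\Mesh$ by inserting the reconstruction: since $\int_K \PotRec\hU = \int_K U_\Mesh$ and $\PotRec\hU$ is element-wise one degree higher, the plan is to pass from $\GradM U_\Mesh$ to $\GradM \PotRec\hU$ at the price of controllable terms, exploiting \eqref{gradient-reconstruction-grad} tested with the elliptic projection $\EllPro\psi$ (or with $\PotRec\Interp\psi = \EllPro\psi$ via \eqref{RI=E}). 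Concretely I expect to arrive at an identity of the form
\[
\normLeb{w}{\Domain}^2 = \aextHHO(\hu - \hU, \hpsi - \Interp\psi) + (\text{stabilization/consistency remainders}),
\]
where $\hu = (u, u_{|\Skeleton})$, after using that $u$ solves \eqref{Poisson} and $\hU$ solves \eqref{HHO-method-modified}.

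**Key steps, in order.** First, exploit the Galerkin-type orthogonality: test \eqref{HHO-method-modified} with $\hsigma = \Interp\psi$ and use \eqref{algebraic-consistency-suffcond} from Lemma~\ref{L:moment-preservation}, together with \eqref{RI=E}, to replace $\langle f, \HHOsmt\Interp\psi\rangle$ by $\aextHHO(\hu, \Interp\psi) + \Stab(\cdot)$-type corrections — here the moment-preservation of $\HHOsmt$ is what makes the smoother ``invisible'' in the consistency analysis, exactly as in Lemma~\ref{L:algebraic-consistency}. Second, bound the resulting factor $\normHHO{\hu-\hU}$ by Theorem~\ref{T:H1-estimate} and the companion factor involving $\psi$ by the interpolation estimate \eqref{interpolation-errors-H1-norm} applied to $\psi$, which gives $\sum_K \inf_{q}\normLeb{\Grad(\psi-q)}{K}^2 \Cleq h^{2\alpha}\normSemi{\psi}_{H^{1+\alpha}(\Domain)}^2 \Cleq h^{2\alpha}\normLeb{w}{\Domain}^2$ by standard fractional polynomial approximation and the elliptic regularity bound \eqref{elliptic-regularity}. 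Third, handle the stabilization remainder terms: terms of the form $\Stab(\hU,\Interp\psi)$ must be bounded using the Cauchy–Schwarz inequality for $\Stab$, with $\Stab(\hU,\hU)^{1/2} \Cleq (\text{H1 best error})^{1/2}$ from Theorem~\ref{T:H1-estimate} and $\Stab(\Interp\psi,\Interp\psi)^{1/2} \Cleq h^\alpha\normLeb{w}{\Domain}$ again from \eqref{interpolation-errors-H1-norm} applied to $\psi$ — note that this is where it is crucial that $\Interp\psi$ (not $\psi$ itself) enters, since $\Stab$ vanishes on conforming functions but not on the interpolant. Finally, collect everything: each product is $\Cleq h^\alpha \normLeb{w}{\Domain} \cdot \big(\sum_K \inf_q \normLeb{\Grad(u-q)}{K}^2\big)^{1/2}$, and dividing by $\normLeb{w}{\Domain}$ and squaring yields \eqref{L2-supercloseness}.

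**The main obstacle.** The delicate point is the bookkeeping in the duality identity: after testing both continuous and discrete problems, I will get cross terms that are neither pure $\aextHHO$-pairings nor pure stabilization terms, because the discrete load uses $\HHOsmt\Interp\psi$ rather than $\Interp\psi$, and because $\GradM\PotRec\hU$ is one polynomial degree above $\GradM U_\Mesh$ so the two gradients do not simply cancel against $\Grad\psi$. The resolution, as in \cite[Theorem~10]{DiPietro.Ern.Lemaire:14} and the analysis in \cite{Veeser.Zanotti:17p3}, is to systematically use \eqref{algebraic-consistency-suffcond} to convert smoother-induced discrepancies into stabilization terms $\Stab(\cdot,\Interp\psi)$, and to use the defining property \eqref{gradient-reconstruction-grad} of $\PotRec$ tested with $\EllPro\psi$ to absorb the degree mismatch — the element-wise orthogonality \eqref{elliptic-projection-orthogonality} then guarantees $\normLeb{\GradM(\psi - \PotRec\Interp\psi)}{K} = \inf_q\normLeb{\Grad(\psi-q)}{K}$, so no regularity above $H^{1+\alpha}$ of $\psi$ is ever required. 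Everything else — the fractional approximation estimate $\inf_q\normLeb{\Grad(\psi-q)}{K} \Cleq h_K^\alpha \normSemi{\psi}_{H^{1+\alpha}(K)}$ and the reabsorption of $\normLeb{w}{\Domain}$ — is routine. That the lowest-order case $\Degree=0$ needs no special treatment (unlike \cite{DiPietro.Ern.Lemaire:14}) is a direct consequence of the smoother preserving face moments up to degree $\Degree$ and cell moments up to degree $\Degree-1 = -1$, i.e.\ none, so the face averaging in $\AvgOper{}$ already supplies the needed first-order accuracy uniformly in $\Degree$.
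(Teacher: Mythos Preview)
Your overall Aubin--Nitsche strategy and most of the ingredients you list (testing the discrete problem with $\Interp\psi$, invoking \eqref{algebraic-consistency-suffcond}, bounding $\Stab(\hU,\Interp\psi)$ via Cauchy--Schwarz and Theorem~\ref{T:H1-estimate}) do match the paper's proof. However, there is a genuine gap in how you plan to handle the smoother.

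Your claim that \eqref{algebraic-consistency-suffcond} ``converts smoother-induced discrepancies into stabilization terms'' is only half true: that identity requires the \emph{trial} argument $\hs$ to lie in $\HHOspace{}$. When you apply it with $\hs=\Interp u$ you still have to account for the residual
\[
\aextHHO(\hu-\Interp u,\,\hHHOsmt\Interp\psi)=\int_\Domain \GradM(u-\EllPro u)\cdot\Grad\HHOsmt\Interp\psi,
\]
which after the $\EllPro$-orthogonality becomes $\int_\Domain \GradM(u-\EllPro u)\cdot\GradM(\HHOsmt\Interp\psi-\EllPro\psi)$ --- the paper's term $\mathfrak{T}_3$. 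To close the argument you need
\[
\normLeb{\GradM(\HHOsmt\Interp\psi-\EllPro\psi)}{\Domain}\;\Cleq\; h^{\alpha}\normSemi{\psi}_{H^{1+\alpha}(\Domain)},
\]
and this does \emph{not} follow from the stability bound \eqref{smoother-moment-preserving-stability}: applied to $\hsigma=\Interp\psi$, that estimate is only $O(\normLeb{\Grad\psi}{\Domain})$, with no factor $h^\alpha$. The paper instead opens up $\HHOsmt=\AvgOper{}+\BubbOper{}(\id-\hAvgOper{})$, uses that $\AvgOper{}\Interp\psi$ is a nodal averaging of $\PotRec\Interp\psi=\EllPro\psi$, and bounds the result by the jumps $\Jump{\EllPro\psi}=\Jump{\EllPro\psi-\psi}$, which are $O(h_F^{1/2+\alpha})$ since $\psi$ is continuous. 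This is precisely where the choice to average $\PotRec\hsigma$ at degree-$(\Degree{+}1)$ nodes in \eqref{averaging} becomes essential (cf.\ Remark~\ref{R:averaging-variants}), and is the actual reason the case $\Degree=0$ needs no special treatment --- not the face-moment preservation you cite.

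A secondary point: your starting identity $\normLeb{w}{\Domain}^2=\int_\Domain\Grad\psi\cdot\Grad w$ is not valid as written, because $w=U_\Mesh-\LebPro_\Mesh u$ is discontinuous across interfaces. Element-wise integration by parts produces boundary terms $\sum_{K,F}\int_F w\,\Grad\psi\cdot\Normal_K$, which after inserting the single-valued skeleton quantities and subtracting the analogous identity with $\EllPro\psi$ yield the paper's term $\mathfrak{T}_1$. This is routine, but it is a separate contribution that does not fold into $\aextHHO(\hu-\hU,\hpsi-\Interp\psi)$ as you suggest.
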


\begin{proof}
We test \eqref{duality} with $U_\Mesh - \LebPro_\Mesh u$ and integrate by parts element-wise, exploiting the regularity of $\psi$. We obtain
\begin{equation*}
	\begin{split}
		&\normLeb{U_\Mesh - \LebPro_\Mesh u}{\Domain}^2
		=
		-\sum_{K,F} \int_F (U_\Mesh- \LebPro_\Mesh u) \Grad \psi \cdot \Normal_K
		+
		\int_\Domain \GradM (U_\Mesh- \LebPro_\Mesh u) \cdot \Grad \psi\\
		& \quad =
		-\sum_{K,F} \int_F ((U_\Mesh-U_\Skeleton)- (\LebPro_\Mesh u-\LebPro_\Skeleton u)) \Grad \psi \cdot \Normal_K
		+
		\int_\Domain \GradM (U_\Mesh- \LebPro_\Mesh u) \cdot \Grad \psi
	\end{split}
\end{equation*}
where $K$ and $F$ vary in $\Mesh$ and $\Faces{K}$, respectively. The second identity follows from the observation that each interface of $\Mesh$ appears twice in the sum, with opposite orientations, combined with the fact that both $u$ and $\hU$ vanish on the boundary faces. Note also that, to alleviate the notation, we write $U_\Mesh$ and $\LebPro_\Mesh u$, instead of $(U_\Mesh)_{|K}$ and $(\LebPro_\Mesh u)_{|K}$, in the face integrals. Owing to the definition of the reconstruction $\PotRec$ in \eqref{gradient-reconstruction}, we have
\begin{equation*}
	\begin{split}
		\int_\Domain \GradM \PotRec (\hU-\Interp u) \cdot \GradM \EllPro \psi
		=&
		-\sum_{K,F} \int_F ((U_\Mesh-U_\Skeleton)- (\LebPro_\Mesh u-\LebPro_\Skeleton u)) \Grad \EllPro \psi \cdot \Normal_K + \\
		& + 
		\int_\Domain \GradM (U_\Mesh - \LebPro_\Mesh u)
		\cdot 
		\GradM \EllPro \psi
	\end{split}
\end{equation*} 
because $\EllPro \psi \in \Poly{\Degree+1}{\Mesh}$ is an admissible test function in \eqref{gradient-reconstruction-grad}. Inserting this identity into the previous one and exploiting the $H^1$-orthogonality of the broken elliptic projection, we infer that
\begin{equation*}
	\begin{split}
		\normLeb{U_\Mesh - \LebPro_\Mesh u}{\Domain}^2
		=&
		-\sum_{K,F} \int_F ((U_\Mesh-U_\Skeleton)- (\LebPro_\Mesh u-\LebPro_\Skeleton u)) \Grad (\psi-\EllPro \psi) \cdot \Normal_K +\\
		& +
		\int_\Domain \GradM \PotRec (\hU-\Interp u) \cdot \GradM \EllPro \psi.
	\end{split}
\end{equation*}
In order to rewrite the second summand in the right-hand side, we recall the identity $\PotRec \circ \Interp = \EllPro$ from \eqref{RI=E}. Then, we exploit problems \eqref{Poisson} and \eqref{HHO-method-modified} and observe that the combination of Lemma~\ref{L:moment-preservation} with Proposition~\ref{P:smoother-moment-preserving} guarantees the validity of \eqref{algebraic-consistency-suffcond} which we exploit here as follows:
\begin{equation*}
\aextHHO(\Interp u,\hHHOsmt \Interp \psi) = \aextHHO(\Interp u,\Interp\psi) - \Stab(\Interp u,\Interp\psi).
\end{equation*}
Thus, we have $\int_\Domain \GradM \EllPro u \cdot \Grad \HHOsmt \Interp \psi  = \int_\Domain \GradM \PotRec \Interp u \cdot \GradM \EllPro \psi$, whence we infer that
\begin{equation}
	\label{L2-estimate-derivation1}
	\int_\Domain \GradM \PotRec (\hU-\Interp u) \cdot \GradM \EllPro \psi
	=
	-\Stab(\hU, \Interp \psi)
	+
	\int_\Domain \GradM (u-\EllPro u) \cdot \Grad \HHOsmt \Interp \psi .
\end{equation}
Therefore, exploiting again the $H^1$-orthogonality of $\EllPro$, we obtain
\begin{equation}
\label{L2-estimate-derivation2}
\normLeb{U_\Mesh - \LebPro_\Mesh u}{\Domain}^2
=
\mathfrak{T}_1 + \mathfrak{T}_2 + \mathfrak{T}_3
\end{equation}
with
\begin{gather}
\nonumber
\mathfrak{T}_1 
:= 
-\sum_{K,F} \int_F ((U_\Mesh-U_\Skeleton)- (\LebPro_\Mesh u-\LebPro_\Skeleton u)) \Grad (\psi-\EllPro \psi) \cdot \Normal_K\\
\nonumber
\mathfrak{T}_2 
:= 	
- \Stab(\hU, \Interp \psi)
\qquad  \qquad
\mathfrak{T}_3 := \int_\Domain \GradM  (u-\EllPro u) \cdot \GradM (\HHOsmt \Interp \psi  - \EllPro \psi).
\end{gather}
It remains to bound the three summands $\mathfrak{T}_1$, $\mathfrak{T}_2$ and $\mathfrak{T}_3$. The definition of the interpolant $\Interp$ and the coercivity stated in Lemma~\ref{L:discrete-stability} entail
	\begin{equation*}
		\mathfrak{T}_1^2
		\Cleq
		\normHHO{\hU-\Interp u}^2
		\sum_{K,F} h_F \normLeb{\Grad (\psi-(\EllPro \psi)_{|K})}{F}^2
	\end{equation*}
	where $K$ and $F$ vary in $\Mesh$ and $\Faces{K}$, respectively. Owing to the approximation properties of the broken elliptic projection, we obtain
	\begin{equation*}
	h_F \normLeb{\Grad(\psi-(\EllPro\psi)_{|K})}{F}^2 
	\Cleq
	h_K^{2\alpha} \normSemi{\psi}_{H^{1+\alpha}(K)}^2,
	\end{equation*}
	for all $K \in \Mesh$ and $F \in \Faces{K}$. Combining this bound and the previous one with the first part of Lemma~\ref{L:interpolation-errors} and the $H^1$-norm error estimate \eqref{H1-estimate-nonasymptotic}, we obtain
	\begin{equation*}
		\label{L2-estimate-T1}
		\mathfrak{T}_1^2
		\Cleq
		h^{2\alpha} \normSemi{\psi}_{H^{1+\alpha}(\Domain)}^2
		\sum_{K \in \Mesh} \inf_{q \in \Poly{\Degree+1}{K}}
		\normLeb{\Grad(u-q)}{K}^2.
	\end{equation*}
	Invoking again Lemma~\ref{L:interpolation-errors} and \eqref{H1-estimate-nonasymptotic} yields also 
	\begin{equation*}
		\label{L2-estimate-T2}
		\begin{split}
		\mathfrak{T}_2^2
		&\Cleq
		\left( \sum_{K \in \Mesh} \inf_{q \in \Poly{\Degree+1}{K}}
		\normLeb{\Grad(\psi-q)}{K}^2\right)
		\left( \sum_{K \in \Mesh} \inf_{q \in \Poly{\Degree+1}{K}}
		\normLeb{\Grad(u-q)}{K}^2\right)\\
		&\Cleq
		h^{2\alpha} \normSemi{\psi}_{H^{1+\alpha}(\Domain)}^2
		\sum_{K \in \Mesh} \inf_{q \in \Poly{\Degree+1}{K}}
		\normLeb{\Grad(u-q)}{K}^2,
		\end{split} 
	\end{equation*}	
	where the second estimate follows from standard polynomial approximation properties in Sobolev spaces. In order to bound the third summand $\mathfrak{T}_3$ in \eqref{L2-estimate-derivation2}, we proceed similarly to the proof of \eqref{smoother-moment-preserving-stability} in Proposition~\ref{P:smoother-moment-preserving}. Owing to the approximation properties of the broken elliptic projection, we only need to bound $\normLeb{\Grad(\EllPro \psi - \HHOsmt \Interp \psi)}{K}$. For all $K \in \Mesh$, the triangle inequality yields
	\begin{equation}
	\label{L2-estimate-derivation3}
	\normLeb{\Grad(\EllPro \psi - \HHOsmt \Interp \psi)}{K}
	\leq
	\normLeb{\Grad(\EllPro \psi - \AvgOper{}\Interp \psi)}{K}
	+
	\normLeb{\Grad\BubbOper{} (\Interp \psi - \hAvgOper{}\Interp \psi)}{K}.
	\end{equation} 
	The definitions of $\Interp$ and $\BubbOper{}$ readily imply that $\BubbOper{} \Interp \psi = \BubbOper{} \hpsi$, with $\hpsi = (\psi, (\psi)_{|\Skeleton})$. This observation, Proposition~\ref{P:bubble-smoother} and the multiplicative trace inequality \eqref{trace-inequality} yield
	\begin{equation*}
	\normLeb{\Grad\BubbOper{} (\Interp \psi - \hAvgOper{}\Interp \psi)}{K}
	\Cleq h_K^{-1} \normLeb{\psi - \AvgOper{}\Interp \psi}{K}
	+
	\normLeb{\Grad(\psi - \AvgOper{} \Interp \psi)}{K}.
	\end{equation*}
	Next, we combine \cite[Lemma~4.3]{Ern.Guermond:17} with the identity \eqref{RI=E} and the multiplicative trace inequality \eqref{trace-inequality}. We obtain that
	\begin{equation*}
	\begin{split}
	&h_K^{-1}\normLeb{\EllPro \psi - \AvgOper{} \Interp \psi}{K} + \normLeb{\Grad(\EllPro \psi - \AvgOper{}\Interp \psi)}{K}
	\Cleq
	\sum_{F \cap K \neq \emptyset}
	h_F^{-\frac{1}{2}} 
	\normLeb{\Jump{\EllPro \psi}}{F}\\
	&\qquad \Cleq
	\sum_{K' \cap K \neq \emptyset} \left( 
	h_{K'}^{-1} \normLeb{\psi - \EllPro\psi}{K'}
	+
	\normLeb{\Grad(\psi - \EllPro \psi)}{K'}
	\right)
	\end{split}
	\end{equation*}
	where $F$ and $K'$ vary in $\Faces{}$ and $\Mesh$, respectively, and $\Jump{\cdot}$ is the jump operator. We insert this inequality and the previous one into \eqref{L2-estimate-derivation3}. Owing to the approximation properties of the broken elliptic projection, we infer that
	\begin{equation*}
	\normLeb{\Grad(\EllPro \psi - \HHOsmt \Interp \psi)}{K}
	\Cleq 
	\sum_{K' \cap K \neq \emptyset} h_{K'}^{\alpha} \normSemi{\psi}_{H^{1+\alpha}(K')}.
	\end{equation*}
	Squaring and summing over all $K \in \Mesh$, we finally derive that 
	\begin{equation}
	\label{L2-estimate-T3}
	\mathfrak{T}_3^2
	\Cleq
	h^{2\alpha} \normSemi{\psi}_{H^{1+\alpha}(\Domain)}^2
	\sum_{K \in \Mesh} \inf_{q \in \Poly{\Degree+1}{K}}
	\normLeb{\Grad(u-q)}{K}^2.
	\end{equation}
	in view of \eqref{elliptic-projection-orthogonality} and recalling that the maximum number of simplices touching a given simplex is $\leq C_{\Shape}$.   
Collecting the bounds on $\mathfrak{T}_1$, $\mathfrak{T}_2$ and $\mathfrak{T}_3$ and invoking the elliptic regularity property \eqref{elliptic-regularity} concludes the proof.
\end{proof}

% L2-estimate

\begin{theorem}[$L^2$-norm error estimate]
	\label{T:L2-estimate}
	Let $u \in \SobH{\Domain}$ solve \eqref{Poisson} and denote by $\hU \in \HHOspace{}$ the solution of \eqref{HHO-method-modified} with $\HHOsmt$ as in \eqref{smoother-moment-preserving}. Let $\alpha \in (\frac12, 1]$ be such that \eqref{elliptic-regularity} is satisfied. Then, the following holds true:
	\begin{equation}
		\label{L2-estimate-nonasymptotic}
		\normLeb{u-\PotRec \hU}{\Domain}^2
		\leq C_{\Shape, \Degree}
		h^{2\alpha} \sum_{K\in \Mesh} 
		\inf_{q \in \Poly{\Degree+1}{K}} \normLeb{\Grad(u-q)}{K}^2
	\end{equation} 
	where $h := \max_{K \in \Mesh} h_K$. Furthermore, if $u \in H^m(\Domain)$ with $m \in \{ 1,\dots, \Degree+2 \}$, we have
	\begin{equation}
		\label{L2-estimate-asymptotic}
		\normLeb{u-\PotRec \hU}{\Domain}^2
		\leq C_{\Shape, \Degree}
		h^{2\alpha} \sum_{K\in \Mesh}
		h_K^{2(m-1)} \normSemi{u}^2_{H^m(K)}. 
	\end{equation} 
\end{theorem}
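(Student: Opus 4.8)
The plan is to measure $\PotRec\hU$ against the reconstruction $\PotRec\Interp u$ of the interpolant — which, by \eqref{RI=E}, coincides with the broken elliptic projection $\EllPro u$ — and to reduce the whole estimate to two ingredients that are already available: the broken $H^1$-norm estimate \eqref{H1-estimate-nonasymptotic} of Theorem~\ref{T:H1-estimate} and the supercloseness estimate \eqref{L2-supercloseness} of Lemma~\ref{L:supercloseness}. Starting from the triangle inequality,
\[
\normLeb{u-\PotRec\hU}{\Domain}^2
\le
2\,\normLeb{u-\PotRec\Interp u}{\Domain}^2
+
2\,\normLeb{\PotRec(\hU-\Interp u)}{\Domain}^2 ,
\]
I would bound the first term of order $h^2\sum_{K}\inf_{q}\normLeb{\Grad(u-q)}{K}^2$ directly by \eqref{interpolation-errors-L2}, so that the actual work concentrates on the remainder $\normLeb{\PotRec(\hU-\Interp u)}{\Domain}$, which I would treat element by element by splitting it into a constant part and a zero-mean part.

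So fix $K\in\Mesh$, set $w:=\PotRec(\hU-\Interp u)$ and $\bar w:=|K|^{-1}\int_K w$, and write $\normLeb{w}{K}^2=\normLeb{w-\bar w}{K}^2+|K|\,\bar w^2$. For the zero-mean part I would use the Poincar\'e--Steklov inequality \eqref{poincare-inequality}, which gives $\normLeb{w-\bar w}{K}\le\pi^{-1}h_K\normLeb{\Grad w}{K}$, together with $\normLeb{\Grad w}{K}\le\normLeb{\Grad(u-\PotRec\hU)}{K}+\normLeb{\Grad(u-\PotRec\Interp u)}{K}$; squaring, summing over $\Mesh$, and invoking \eqref{H1-estimate-nonasymptotic} and the orthogonality \eqref{elliptic-projection-orthogonality} then yields $\sum_{K}\normLeb{w-\bar w}{K}^2\Cleq h^2\sum_{K}\inf_{q}\normLeb{\Grad(u-q)}{K}^2$. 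For the constant part, the key observation is that the averaging conditions built into the reconstruction force $\int_K w$ to equal $\int_K(U_\Mesh-\LebPro_\Mesh u)$: indeed \eqref{gradient-reconstruction-avg} applied to $\hU$ gives $\int_K\PotRec\hU=\int_K U_\Mesh$, while \eqref{RI=E} and \eqref{elliptic-projection-avg} applied to $\Interp u$ give $\int_K\PotRec\Interp u=\int_K u=\int_K\LebPro_\Mesh u$. Hence by Cauchy--Schwarz $|K|\,\bar w^2=|K|^{-1}\bigl(\int_K(U_\Mesh-\LebPro_\Mesh u)\bigr)^2\le\normLeb{U_\Mesh-\LebPro_\Mesh u}{K}^2$, and summing over $\Mesh$ and applying Lemma~\ref{L:supercloseness} bounds $\sum_K|K|\,\bar w^2\le\normLeb{U_\Mesh-\LebPro_\Mesh u}{\Domain}^2$ by $C_{\Shape,\Degree}\,h^{2\alpha}\sum_{K}\inf_{q}\normLeb{\Grad(u-q)}{K}^2$. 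Collecting the three contributions and using that $\alpha\le1$ — so $h^2\le h^{2\alpha}\Diam(\Domain)^{2(1-\alpha)}$, the extra factor being absorbed into the constant — would give the non-asymptotic estimate \eqref{L2-estimate-nonasymptotic}. The asymptotic estimate \eqref{L2-estimate-asymptotic} then follows by bounding each local best error $\inf_{q\in\Poly{\Degree+1}{K}}\normLeb{\Grad(u-q)}{K}$ through standard polynomial approximation in Sobolev spaces, exactly as in the last step of the proof of Theorem~\ref{T:H1-estimate}.

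The hard part is not in this step: the delicate analysis — the Aubin--Nitsche duality with the dual problem \eqref{duality} and the estimation of the three terms $\mathfrak{T}_1$, $\mathfrak{T}_2$, $\mathfrak{T}_3$, which relies on the consistency identity \eqref{algebraic-consistency-suffcond} and on the smoother stability \eqref{smoother-moment-preserving-stability} — is already carried out in Lemma~\ref{L:supercloseness}, which I may assume. The only point requiring genuine care in the present argument is the identification of the mean of $\PotRec(\hU-\Interp u)$ with that of $U_\Mesh-\LebPro_\Mesh u$: this is precisely what lets one feed the supercloseness estimate into an $L^2$-bound for the \emph{reconstruction} (rather than for the cell component $U_\Mesh$), and it hinges on the average-preserving constraint \eqref{gradient-reconstruction-avg} of the reconstruction operator, which is the feature that distinguishes this HHO setting from the single-function nonconforming methods treated in the companion works.
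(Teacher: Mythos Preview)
Your proposal is correct and follows essentially the same route as the paper: split $u-\PotRec\hU$ against $\PotRec\Interp u=\EllPro u$, decompose $\PotRec(\hU-\Interp u)$ element-wise into its mean (identified with $U_\Mesh-\LebPro_\Mesh u$ via \eqref{gradient-reconstruction-avg} and controlled by Lemma~\ref{L:supercloseness}) and its zero-mean part (controlled by Poincar\'e--Steklov and Theorem~\ref{T:H1-estimate}), then absorb the $h^2$ contributions into $h^{2\alpha}$. The only cosmetic differences are that you use the orthogonal Pythagorean split instead of the triangle inequality for the mean/zero-mean decomposition, and you make the step $h^2\le h^{2\alpha}\Diam(\Domain)^{2(1-\alpha)}$ explicit where the paper leaves it implicit.
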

%
% Proof of the L2 estimate
\begin{proof}
We have $\normLeb{u-\PotRec \hU}{\Domain} \leq \normLeb{u-\EllPro u}{\Domain} + \normLeb{\PotRec \hU - \EllPro u}{\Domain}$. Concerning the first summand, the identity \eqref{RI=E} and the bound \eqref{interpolation-errors-L2} imply that
	\begin{equation}
		\label{elliptic-projection-optimal}
		\normLeb{u-\EllPro u}{\Domain}^2
		\leq 
		\sum_{K\in \Mesh} \left( \dfrac{h_K}{\pi}\right) ^2 \inf_{q \in \Poly{\Degree+1}{K}}
		\normLeb{\Grad (u-q)}{K}^2.
	\end{equation}
	Concerning the other summand, the identity \eqref{RI=E} implies $\PotRec \hU - \EllPro u = \PotRec(\hU - \Interp u)$. We fix any $K \in \Mesh$ and denote by $\fint_K$ the integral mean value on $K$. The identity \eqref{gradient-reconstruction-avg} and the Poincar\'{e}--Steklov inequality \eqref{poincare-inequality} yield
	\begin{equation*}
		\begin{split}
			\normLeb{\PotRec(\hU-\Interp u)}{K}
			&\leq
			\normLeb{\PotRec(\hU-\Interp u) - \textstyle\fint_K \PotRec(\hU-\Interp u)}{K} 
			+ 
			\normLeb{\textstyle\fint_K \PotRec(\hU-\Interp u)}{K}\\
			& \leq
			\pi^{-1} h_K \normLeb{\Grad \PotRec(\hU-\Interp u)}{K}
			+
			\normLeb{U_\Mesh-\LebPro_\Mesh u}{K}.
		\end{split}
	\end{equation*}
	Summing over all simplices of $\Mesh$ and using the first part of Lemma~\ref{L:interpolation-errors} and the $H^1$-norm error estimate \eqref{H1-estimate-nonasymptotic}, we obtain
	\begin{equation}
		\label{L2-estimate-partial}
		\normLeb{u-\PotRec \hU}{\Domain}^2
		\!\Cleq\!
		\sum_{K \in \Mesh}  \!\dfrac{h_K^2}{\pi^2} \inf_{q \in \Poly{\Degree+1}{K}}
		\normLeb{\Grad(u-q)}{K}^2
		+
		\normLeb{U_\Mesh-\LebPro_\Mesh u}{\Domain}^2.
	\end{equation}
Thus, we derive \eqref{L2-estimate-nonasymptotic} by inserting the bound~\eqref{L2-supercloseness} into \eqref{L2-estimate-partial}. Finally, the estimate \eqref{L2-estimate-asymptotic} follows from \eqref{L2-estimate-nonasymptotic} and standard polynomial approximation properties in Sobolev spaces.
\end{proof}

% Comparison with best L2-approximation
Similarly to Theorem~\ref{T:H1-estimate}, estimate \eqref{L2-estimate-nonasymptotic} holds under the minimal regularity $u \in \SobH{\Domain}$, and \eqref{L2-estimate-asymptotic} exploits only the element-wise regularity of $u$. Still, both estimates are more pessimistic than \eqref{interpolation-errors-L2} in Lemma~\ref{L:interpolation-errors}, even for $\alpha=1$, if $\Mesh$ is a graded mesh. This is a general drawback of the estimates derived via the Aubin--Nitsche duality argument. Perhaps, a better result could be obtained with the help of the technique recently devised in \cite{Georgoulis.Makridakis.Pryer:18, Makridakis:18}. 

%---------------------------------------------------------------
%---------------------------------------------------------------

\section{Polytopic meshes}
\label{S:polytopic-meshes}

% Main question
Since the HHO methods in \cite{DiPietro.Ern:15,DiPietro.Ern.Lemaire:14} are not only defined for matching simplicial meshes of $\Domain$, but more generally on polyhedral meshes possibly comprising hanging nodes, it is worth asking if we can relax the assumptions on $\Mesh$ in the previous sections. To this end, a first inspection reveals that the abstract results of section~\ref{SS:quasi-optimality} on the quasi-optimality of \eqref{HHO-method-modified} build only on the notion of interface and on the nondegeneracy of $\HHObform$. Of course, both ingredients are in any case needed in the definition of the space $\HHOspace{}$ and for the solution of problem \eqref{HHO-method-standard}. Thus, in principle, it appears possible to design HHO methods, that are quasi-optimal in the semi-norm $\normHHO{\cdot}$, within a larger class of \textit{polytopic} meshes.   

% Polytopic meshes
Proceeding as in \cite{DiPietro.Ern:15,DiPietro.Ern.Lemaire:14}, we now consider meshes $\Mesh = (K)_{K \in \Mesh}$ of $\Domain$ such that
\begin{itemize}
\item $\overline{\Domain} = \bigcup_{K \in \Mesh} \overline{K}$ and the cardinality of $\Mesh $ is finite,
\vspace{1mm} 
\item each cell $K \in \Mesh$ is an open polygon/polyhedron,
\vspace{1mm}
\item for all cells $K_1, K_2 \in \Mesh$ with $K_1 \neq K_2$, we have $K_1 \cap K_2 = \emptyset$.
\end{itemize} 

% Interfaces
We say that $F \subset \overline{\Domain}$ is a face of $\Mesh$ if it is a subset, with nonempty relative interior, of some $(\Dim-1)$-dimensional affine space $H_F$ and if one of the following conditions holds true: either there are two distinct cells $K_1, K_2 \in \Mesh$ so that $F = \overline{K}_1 \cap \overline{K}_2 \cap H_F$ or there is one cell $K \in \Mesh$ so that $F = \overline{K} \cap \partial \Domain \cap H_F$. We collect in the set $\FacesMint$ all the interfaces, i.e. the faces of $\Mesh$ fulfilling the first condition.   
 
% Admissible meshes
To preserve the validity of the results in section~\ref{SS:stability-interpolation}, we further assume that $\Mesh$ is an admissible mesh in the sense of \cite[Section~1.4]{DiPietro.Ern:12}. More precisely, we require that there is a matching simplicial submesh $\Submesh = (T)_{T \in \Submesh}$ of $\Mesh$, such that
\begin{itemize}
\item for each simplex $T \in \Submesh$, there is a cell $K \in \Mesh$ such that $T \subseteq K$ and $h_K\lesssim h_T$.
\end{itemize} 
The inequalities stated in \eqref{discrete-trace-inequality}, \eqref{trace-inequality} and \eqref{poincare-inequality} as well as the ones in Lemmata~\ref{L:discrete-stability} and \ref{L:interpolation-errors} still hold true under this assumption, possibly up to more pessimistic constants, depending on the shape regularity of $\Mesh$ and $\Submesh$. We refer to \cite[section~1.4]{DiPietro.Ern:12} and \cite{DiPietro.Ern.Lemaire:14} for a more detailed discussion on this point.    

% The bottleneck
The real bottleneck in the extension of our previous results is the construction of a smoother $\HHOsmt$, generalizing the one in Proposition~\ref{P:smoother-moment-preserving}. For this purpose, one option is to still write $\HHOsmt$ as the combination of a bubble smoother, which accommodates the conservation of the moments prescribed by Proposition~\ref{P:quasi-optimality-HHO}, and an averaging operator, that serves to keep under control the constant $C_\HHO$ in \eqref{consistency-constant}. %While it seems relatively easy to design the former operator, again with the help of bubble functions, the definition of the latter appears more problematic in this context, due to the general difficulty in building $H^1_0$-conforming finite element spaces on general, possibly non-simplicial, meshes. 

% Bubble smoother - Cells
For the sake of completeness, we sketch a possible construction for arbitrary $\Degree \geq 0$. For all $K \in \Mesh$, we can find a simplex $T_K \in\Submesh$ such that $T_K \subseteq K$. Denote by $\Phi_{T_K} \in \SobH{\Domain}$ the \textit{cell bubble} determined by $(i)$ $\Phi_{T_K} \equiv 0$ in $\overline{\Domain} \setminus T_K$, $(ii)$ $(\Phi_{T_K})_{|T_K} \in \Poly{\Dim + 1}{T_K}$ and $(iii)$ $\Phi_{T_K}(m_{T_K}) = 1$ at the barycenter $m_{T_K}$ of $T_K$. Since $(q_1, q_2) \mapsto \int_K q_1 q_2 \Phi_{T_K}$ is a scalar product on $\Poly{\Degree-1}{K}$, we define the operators $\BubbOper{K}$ and $\BubbOper{\Mesh}$ as in \eqref{smoother-cell} and \eqref{smoother-mesh}, respectively, with $\Phi_{T_K}$ in place of $\Phi_K$. 

% Bubble smoother - Interfaces
For all $F \in \FacesMint$, we can find an interface $T_F$ of $\Submesh$ and $T_1, T_2 \in \Submesh$ so that $T_F \subseteq F$ and $T_F = T_1 \cap T_2$. Set $\omega_{T_F} := T_1 \cup T_2$ and denote by $\Phi_{T_F} \in \SobH{\Domain}$ the \textit{face bubble} obtained prescribing $(i)$ $\Phi_{T_F} \equiv 0$ in $\overline{\Domain} \setminus \omega_{T_F}$, $(ii)$ $(\Phi_{T_F})_{|T_j} \in \Poly{\Dim}{T_j}$ for $j=1,2$ and $(iii)$ $\Phi_{T_F}(m_{T_F}) = 1$ at the barycenter $m_{T_F}$ of $T_F$. We define the operator $\BubbOper{F}$ as in \eqref{smoother-face}, with $\Phi_{T_F}$ in place of $\Phi_F$. Then, for all $v_\Skeleton \in \Leb{\Skeleton}$, we set 
\begin{equation*}
\BubbOper{\Skeleton} v_\Skeleton
:=
\begin{cases}
\sum \limits_{F \in \FacesMint}
(\BubbOper{F} v_\Skeleton)\Phi_{T_F}, & \Degree =0,\\[3mm]
\sum \limits_{F \in \FacesMint}
\sum\limits_{z \in \Lagr{\Degree}{\Submesh} \cap T_F} (\BubbOper{F} v_\Skeleton)(z) \Phi_z \Phi_{T_F}, & \Degree \geq 1,
\end{cases}
\end{equation*}
where $\Lagr{\Degree}{\Submesh}$ denotes the Lagrange nodes of degree $\Degree$ of $\Submesh$ and $\Phi_z$ is the Lagrange basis function of $\Sob{\Domain} \cap \Poly{\Degree}{\Submesh}$ associated with the evaluation at $z$. 

% Bubble smoother
With $\BubbOper{\Mesh}$ and $\BubbOper{\Skeleton}$ as indicated, the bubble smoother $\BubbOper{}: \Leb{\Domain} \times\Leb{\Skeleton} \to \SobH{\Domain}$ is simply given by \eqref{bubble-smoother} and fulfills \eqref{moment-preservation} and \eqref{bubble-smoother-stability}.

% Averaging
Finally, denote by $\LagrInt{\Degree+1}{\Submesh}$ the interior Lagrange nodes of degree $\Degree+1$ of $\Submesh$. For all $\hsigma = (\sigma_\Mesh, \sigma_\Skeleton) \in \HHOspace{}$, we consider the averaging 
\begin{equation}
\AvgOper{} \hsigma
:=
\sum_{z \in \LagrInt{\Degree+1}{\Submesh}}
\left( \dfrac{1}{\#\omega_z}
\sum_{T \in \omega_z} (\PotRec\hsigma)_{|T}(z) 
\right) 
\Phi_z, 
\end{equation}
where $T$ varies in $\Submesh$ and $\omega_z$ collects the simplices of $\Submesh$ to which $z$ belongs. 

% Finally, the smoother
With $\AvgOper{}$ and $\BubbOper{}$ as indicated, the smoother $\HHOsmt: \HHOspace{} \to \SobH{\Domain}$, defined as in Proposition~\ref{P:smoother-moment-preserving}, fulfills \eqref{moment-preservation} and \eqref{smoother-moment-preserving-stability}. The derivation of $H^1$- and $L^2$-norm error estimates of the HHO method \eqref{HHO-method-modified} with this smoother proceeds along the same lines as in section~\ref{SS:error-estimates}.

\begin{remark}[Use of the submesh]
\label{R:use-of-submesh}
The use of the simplicial submesh $\Submesh$ in the definition of the bubble smoother $\BubbOper{}$ is not really necessary. Indeed, one only needs bubble functions attached to the cells and to the interfaces of $\Mesh$ and bounded extension operators from each interface to $\Domain$. In contrast, our construction of the averaging $\AvgOper{}$ substantially builds on the submesh. Of course, this can be seen as a main disadvantage, as it restricts applicability of the proposed method to the class of admissible meshes. Still, it must be said that this is just one possible construction and that the use of alternative averaging operators could be further explored.
\end{remark}

%---------------------------------------------------------------
%---------------------------------------------------------------

%\nocite{*}
%\bibliographystyle{amsplain}
%\bibliography{biblio}

\end{document}